\documentclass[11pt]{article}
\usepackage{amsmath,amsfonts,amssymb,amsthm}
\usepackage{mathrsfs}
\usepackage{latexsym}
\usepackage[english]{babel}

\usepackage{graphicx}
\usepackage[usenames,dvipsnames]{xcolor}

\renewenvironment{proof}[1][\proofname]{{\bfseries #1.} }{\qed}

\setcounter{MaxMatrixCols}{10}

\def\Cov{{\rm Cov\,}}

\voffset-.7in \hoffset-.7in
\textwidth6in \textheight9.1in
\parindent1em

\newcommand{\Var}{{\rm Var}}

\newcommand{\eps}{\varepsilon}

\def\authors#1{{ \begin{center} #1 \vspace{0pt} \end{center} } \smallskip}
\def\institution#1{{\sl \begin{center} #1 \vspace{0pt} \end{center} } }
\def\inst#1{\unskip $^{#1}$}
\def\title#1{{\huge\bf  \begin{center} #1 \vspace{0pt} \end{center}  } \smallskip}

\def\paref#1{(\ref{#1})}

\newtheorem{theorem}{Theorem}[section]

\newtheorem{proposition}[theorem]{Proposition}

\newtheorem{lemma}[theorem]{Lemma}
\newtheorem{corollary}[theorem]{Corollary}
\newtheorem{definition}[theorem]{Definition}

\newtheorem{remark}[theorem]{Remark}


\begin{document}
	
	\date{}
	
	\title{\sc Laguerre Expansion for Nodal Volumes\\ and Applications}
	\authors{\large Domenico Marinucci\inst{1}, Maurizia Rossi\inst{2}, Anna Paola Todino\inst{3}}
	\institution{\inst{1}Dipartimento di Matematica, Universit\`a di Roma ``Tor Vergata"\\
		\inst{2}Dipartimento di Matematica e Applicazioni, Universit\`a di Milano-Bicocca\\
		\inst{3}Dipartimento di Scienze e Innovazione Tecnologica, Universit\`a del Piemonte Orientale}
	
	\begin{abstract}
		
	We investigate the nodal volume of random hyperspherical harmonics $\lbrace T_{\ell;d}\rbrace_{\ell\in \mathbb N}$ on the $d$-dimensional unit sphere ($d\ge 2$).  We exploit  an orthogonal expansion in terms of Laguerre polynomials;  this representation entails a drastic reduction in the computational complexity and allows to prove \emph{isotropy} for chaotic components, an issue which was left open in the previous literature. As a further application, we establish our main result, i.e., variance bounds for the nodal volume in any dimension;  for $d\ge 3$ and as the eigenvalues diverge (i.e., as $\ell\to +\infty$), we obtain the upper bound $O(\ell^{-(d-2)})$ (that we conjecture to be \emph{sharp}).  As a consequence, we show that the so-called Berry's cancellation phenomenon holds in any dimension: namely, the nodal variance is one order of magnitude smaller than the variance of the volume of level sets at any non-zero threshold, in the high-energy limit.


		\smallskip
		
		\noindent {\sc Keywords and Phrases:} Nodal sets; Laguerre polynomials; Asymptotic fluctuations; Berry's cancellation phenomenon.
		
		\smallskip
		
		\noindent {\sc AMS Classification:} 60F10, 60G15, 60G60.
		
	\end{abstract}

\today

\

\section{Introduction}


In the last decade a lot of efforts have been devoted to the investigation of the geometric and topological properties of random eigenfunctions on the unit $d$-dimensional sphere $\mathbb{S}^d$, for $d\ge 2$, in particular their nodal length (in dimension 2), or more generally their nodal volumes (in higher dimensions), see e.g. \cite{M23,W23} for some recent reviews of this literature. While the computation of expected values is straightforward, asymptotic variances (in the high-energy regimes) are more challenging. For $d=2$, the exact asymptotic order for the variance is known to be $\frac{1}{32}\log \ell$; this result was first proved by \cite{W10} with a direct computation based on Kac-Rice Formula. Later, the result was extended to a quantitative central limit theorem by \cite{MRW20}; the main tool in their approach is the Wiener-It\^o chaotic decomposition of the nodal length, which allows this quantity to be expressed in terms of linear combinations of Hermite polynomials evaluated in the eigenfunctions themselves and in their partial derivatives. This decomposition turns out to be especially convenient both for the computation of the asymptotic variance and to establish (quantitative) limit theorems, exploiting the well-known Stein-Malliavin approach \cite{NP12}; for random eigenfunctions, this idea had been earlier exploited on the 2-dimensional standard flat torus (i.e., for arithmetic random waves) in \cite{MPRW16} and at about the same time for planar eigenfunctions (i.e., Berry's random waves) in \cite{NPR19}.

In this paper we investigate the nodal volume $\lbrace \mathcal L_{\ell;d-1}\rbrace_{\ell\in \mathbb N}$ of Gaussian Laplace eigenfunctions (random hyperspherical harmonics) $\lbrace T_{\ell;d}\rbrace_{\ell\in \mathbb N}$ on $\mathbb S^d$ for $d\ge 2$. By nodal volume we mean the $d-1$-Hausdorff measure of the zero locus of the field, which is, almost surely, a smooth submanifold of codimension $1$. In principle, to address this issue in any dimension one could follow the same approach as done in dimension 2 by \cite{MPRW16,NPR19,MRW20}; that is, it is possible to compute the projection of the nodal volumes into Wiener chaoses of order $q$, and then study their asymptotic behaviour as the corresponding eigenvalues diverge. A more careful analysis of this approach shows, however, how it becomes practically unfeasible for larger dimensions.

 Indeed, the computation of the chaotic component of order $q$ requires to take care of the projection of the nodal volume on Hermite polynomials evaluated in the eigenfunctions themselves and their $d$ partial derivatives, together with all cross products such that the corresponding degrees of polynomials sum to $q$. As we shall discuss below, it is easy to check that this gives $(d+1)^q$ different terms; although many of these summands can be shown to be null by symmetry arguments, it is obvious that the corresponding expressions become completely unmanageable even for small values of $d$ and $q$ (for instance, for $d=q=4$ we should take care of 625 terms). It is then natural to wonder whether a drastic simplification of these expressions could be feasible, in order to make their asymptotic analysis possible.

 A different route is suggested by some geometric considerations. In fact, as we shall discuss below the Kac-Rice formula allows to express the nodal volumes in terms of the eigenfunctions and \emph{the norm of their gradient}, rather than the single values of their partial derivatives. It is then natural to look for more compact expressions in terms of just these two scalar quantities: the eigenfunctions $T_{\ell;d}$ and the norm of their gradient $\|\nabla T_{\ell;d}\|$. Of course, the standard Hermite expansion must be implemented in terms of Gaussian variables and processes, and  $\|\nabla T_{\ell;d}\|$ does not fit in this framework, even if the eigenfunctions are Gaussian; this is why we need to resort to Laguerre polynomials. 
 
  More precisely, our first main result (Theorem \ref{thm:laguerre}) concerns a formula for the chaotic decomposition 
of the nodal volume:
\begin{equation}\label{chaosexp}
\mathcal L_{\ell;d-1}=\mathbb E[\mathcal L_{\ell;d-1}]+\sum_{q=2}^{+\infty} \mathcal L_{\ell;d-1}[2q].
\end{equation}
 Our formula involves \emph{Laguerre polynomials} only, in particular we can write $\mathcal L_{\ell;d-1}[2q]$ as an integral over the hypersphere of a bivariate polynomial, say $p_{2q}$, evaluated at the field $T_{\ell;d}$ and the norm of its gradient $\|\nabla T_{\ell;d}\|$; $p_{2q}$ admits a simple expansion as the sum of $2q$ terms, each of them the product of two Laguerre polynomials, the first computed on the squared eigenfunction, the second on the squared norm of the gradient. As a further characterization, we show that $p_{2q}$ can also be expressed as the sum of $2q$ terms, now taken to be products of Hermite polynomials (evaluated on the eigenfunctions) and Laguerre (evaluated on the squared norm of the gradient). As anticipated, the cardinality of these terms does \emph{not} depend on the dimension of the hypersphere, although the order of the Laguerre polynomials does.
 
It is worth noticing that in \cite[Remark 2.5]{MR21} it has been proved that the \emph{fourth} chaotic component of nodal intersections of Gaussian Laplacian eigenfunctions (arithmetic random waves) on the $3$-dimensional standard flat torus against a fixed reference surface (with nowhere zero curvature) can be written as the integral of a bivariate polynomial of degree $4$ evaluated at the field restricted to the surface and the norm of its gradient. Moreover, in \cite{Not23} the author extended results of \cite{Tha93}  to obtain a compact formula, analogous to ours, for the Wiener-It\^o chaos expansion of nodal intersections for (at most $3$) i.i.d. copies of arithmetic random waves, still in dimension $3$. We stress that our proof for Theorem \ref{thm:laguerre} is of different nature than those for \cite[(3.54)]{Not23}.
 
We consider a pair of applications of independent interest  
of the neat analytic expressions achieved by the Laguerre expansion in Theorem \ref{thm:laguerre}. Firstly, we note that our formula establishes the isotropy of the integrand functions in the  chaotic components; indeed the random field
 $$
 \mathbb S^d\ni x\mapsto p_{2q}(T_{\ell;d}(x), \| \nabla T_{\ell;d}(x)\|)
 $$ is obviously \emph{isotropic} for every $q$. As a consequence, the variance of \emph{each} chaotic component  can be expressed, up to a constant factor, as a one-dimensional integral (over the ``meridian'' of the hypersphere) of cross moments of Gegenbauer polynomials (the covariance kernel of random hyperspherical harmonics) and their derivatives up to order two.

 As a second application, we study the fluctuations of the nodal volume for $d\ge 3$ around its mean, in the high-energy limit, i.e., as $\ell\to +\infty$. (For $d=2$, as mentioned earlier the exact asymptotic law is known to be  $\frac{1}{32}\log \ell$, see \cite{W10}.)  It is known that $\mathbb E[\mathcal L_{\ell;d-1}]= c_d \sqrt{\ell(\ell+d-1)}$, where $c_d$ is an explicit positive constant only depending on $d$, and $\ell(\ell+d-1) =:E_{\ell;d}$ is the $\ell$-th eigenvalue of the Laplace-Beltrami operator on the $d$-dimensional hypersphere.
We are able to establish the upper bound
	\begin{equation}\label{varbound}
	\Var(\mathcal L_{\ell;d-1}) = O\left ( \ell^{-(d-2)}\right ), \qquad \ell \to +\infty,
	\end{equation}
	that we conjecture to be \emph{sharp}.  We also show that for any non-zero threshold $u \neq 0$, the variance of the volume of the level set $\lbrace x\in \mathbb S^d:T_{\ell;d}(x) = u\rbrace$ is of \emph{exact} order $\ell^{-(d-3)}$, as the eigenvalues diverge; the two results taken together confirm that the Berry's cancellation phenomenon \cite{Ber02} holds for $\mathbb S^d$, $d\ge 3$, namely the ratio between the nodal variance and the variance of the boundary volume at any other threshold converges to zero, as the eigenvalues diverge. 
	
	This asymptotic behaviour can be interpreted in terms of the disappearance of the second chaotic component in the chaotic expansion of the $u$-volume for $u=0$, a phenomenon that had been first noticed in \cite{MPRW16}  for the nodal length of arithmetic random waves. Since then, cancellation phenomena have been proved for a wide class of geometric functionals in \cite{C19, NPR19, MRW20, Tod20, Vid21, Not21, Tod23}, among others.
	
	Our upper bound in \eqref{varbound} improves the result $\Var(\mathcal L_{\ell;d-1}) = O \left ( \ell^{-(d-5)/2} \right )$ established in \cite{Wig09} -  the latter is nevertheless enough to guarantee a law of large numbers for $\mathcal L_{\ell;d-1}/\sqrt{E_{\ell;d}}$, consistently with Yau's conjecture \cite{Yau}.
	
	As far as the second-order fluctuations are concerned, in dimension two they have been shown to be asymptotically Gaussian as previously recalled; the main tool to establish this central limit theorem is the asymptotic equivalence which has been established as $\ell\to +\infty$, in $L^2(\mathbb P)$,  between the nodal length and the so-called sample trispectrum, that is, $\sqrt{E_{\ell;2}} \int_{\mathbb S^2} H_4(T_{\ell;2}(x))\,dx$ where $H_4$ is the fourth Hermite polynomial. In other words, in the chaotic decomposition \eqref{chaosexp}  in dimension $d=2$ the fourth component (corresponding to $q=2$ in our notation) gives the leading term, indeed it has the same asymptotic variance as the total nodal length whereas all the other components are  asymptotically negligible. In dimension $d\ge 3$ instead, we conjecture that \emph{all} chaotic components will share the same order of magnitude and no term will be negligible. Furthermore, we conjecture that the fluctuations are Gaussian also in this case; we leave both these topics for future research.
	
	  Finally, we stress that the Laguerre expansion introduced in this paper for nodal volumes is valid in much greater generality than for random spherical eigenfunctions; indeed, a careful examination of the proof reveals that only pointwise indipendence of the random field with its gradient is required, a property that holds for every Gaussian \emph{isotropic random field}. In this sense, the results in this paper may open further avenues for research on excursion volumes in much broader frameworks than random (hyper)spherical eigenfunctions.

 \subsection{Acknowledgments}

We are grateful to GNAMPA-INdAM Project 2022 \emph{Propriet\'{a} e teoremi limite per funzionali di campi Gaussiani}, to MUR Department of Excellence Project 2023-2027 \emph{MatModTov} and to MUR Prin 2023-2025 \emph{Grafia} for financial support.

\section{Background and notation}

\subsection{Random hyperspherical harmonics}\label{subsec:sphericalharmonics}

For $d\ge 2$, let $\mathbb S^d\subset \mathbb R^{d+1}$ denote the $d$-dimensional unit sphere with the round metric. In standard spherical coordinates $ \theta_i\in [0,\pi]$, $i=1,\dots, d-1$, $\varphi\in [0,2\pi]$, the metric is given by
\begin{equation}\label{metricsphere}
 d\theta_1^2 + \sum_{i=2}^{d-1} \left (\sum_{j=1}^{i-1}\sin^2 \theta_j \right ) d\theta_i^2 + \left (\sum_{j=1}^{d-1}\sin^2 \theta_j \right ) d\varphi^2.
\end{equation}
Let $\Delta_d$ be the corresponding Laplace-Beltrami operator; we consider the Helmholtz equation
$
\Delta_d f + E f =0
$
where $E\ge 0$ and $f:\mathbb S^d \to \mathbb R$. The eigenvalues of $-\Delta_d$ are of the form $E=E_{\ell;d}:= \ell(\ell+d-1)$ for $\ell\in \mathbb N$,
and the $\ell$-th eigenspace has dimension
\begin{equation}\label{dimeig}
\eta_{\ell;d} := \frac{2\ell+d-1}{\ell} \binom{\ell+d-2}{\ell-1} \sim \frac{2}{(d-1)!}\ell^{d-1} \quad \mbox{ as } \ell \to \infty.
\end{equation}
We choose the hyperspherical harmonics $\lbrace Y_{\ell,m;d}\rbrace_{m=1,\dots, \eta_{\ell;d}}$, see \cite[Section 9.3]{VK93}, as an orthonormal basis of the $\ell$-th eigenspace
$$
\Delta_{d}Y_{\ell,m;d}+E_{\ell;d} Y_{\ell,m;d}=0;
$$
they are homogeneous polynomials in $d+1$ variables restricted to $\mathbb S^d$.

The $\ell$-th random hyperspherical harmonics $T_{\ell;d}$ are defined as follows:
$$
T_{\ell;d}(x) := 
\sum_{m=1}^{\eta_{\ell;d}} a_{\ell,m;d}Y_{\ell,m;d}(x),\qquad x\in \mathbb S^d,
$$
where $\lbrace a_{\ell,m;d}\rbrace_{m=1,\dots, \eta_{\ell;d}}$ are i.i.d. centered Gaussian random variables (on some suitable probability space $(\Omega, \mathcal F, \mathbb P)$) with variance
$
\Var(a_{\ell,m;d})=\frac{\mathcal H^d(\mathbb{S}^d)}{\eta_{\ell;d}},
$
and $\mathcal H^d(\mathbb{S}^d)$
is the measure of the $d$-sphere. The random field $T_{\ell;d}$ is centered Gaussian with covariance kernel
$$
\mathbb E[T_{\ell;d}(x)T_{\ell;d}(y)]= \frac{\mathcal H^d(\mathbb{S}^d)}{\eta_{\ell;d}} \sum_{m=1}^{\eta_{\ell;d}} Y_{\ell,m;d}(x) Y_{\ell,m;d}(y)= {G}_{\ell;d}(\cos d(x,y)) \qquad x,y \in \mathbb{S}^d,
$$
where ${G}_{\ell;d}$ is the $\ell$-th Gegenbauer polynomial normalized such that ${G}_{\ell;d}(1)=1$, and $d(x,y)=\arccos\langle x, y\rangle$ denotes the geodesic distance between $x$ and $y$ in $\mathbb S^d\subset \mathbb R^d$. In particular, $T_{\ell;d}$ is isotropic, i.e., invariant in law with respect to the action of the special orthogonal group $SO(d+1)$, and has unit variance.

Recall that
$
{G}_{\ell;d}  =\alpha_\ell^{-1} P_\ell^{(d/2-1,d/2-1)},
$
where $\alpha_\ell:= \binom{\ell+d/2-1}{\ell}$ and
$\lbrace P_\ell^{(d/2-1,d/2-1)}\rbrace_{\ell \in \mathbb N}$ is the family of Jacobi polynomials
$$
P_\ell^{(d/2-1,d/2-1)}(t)= \frac{1}{2^\ell} \sum_{k=0}^{\ell} \binom{\ell+d/2-1}{k} \binom{\ell+d/2-1}{\ell-k} (t-1)^{\ell-k} (t+1)^{k},\qquad t\in [-1,1],
$$
which is orthogonal on $[-1,1]$ with respect to the weight $(1-t^2)^{d/2 -1}$.
Hilb's asymptotic formula \cite[Theorem 8.21.12]{szego} entails that, for any $C>0$, $\varepsilon >0$, uniformly for $\theta\in [0,\pi-\varepsilon]$ and $\ell\ge 1$,
\begin{align}\label{hilbs}
(\sin \theta)^{d/2-1} {G}_{\ell;d}(\cos \theta) = \frac{2^{d/2-1}}{{\ell + d/2-1 \choose \ell}} \frac{\Gamma(\ell+d/2)}{L^{d/2-1}\ell!} \sqrt{\frac{\theta}{\sin \theta}} J_{d/2-1}( L\theta) + \delta(\theta),
\end{align}
where $L:= \ell + (d-1)/2$, $J_{d/2-1}$ is the Bessel function of the first kind of order $d/2-1$, and
\begin{equation*}
\delta(\theta) \ll \begin{cases}
\theta^{1/2} \ell^{-3/2},\qquad &\theta > C/\ell,\\
\theta^2, \qquad &0< \theta < C/\ell.
\end{cases}
\end{equation*}
 (The kernel $\mathbb R^d \times \mathbb R^d \ni (x,y) \mapsto J_{d/2-1}(\|x-y\|)$  is positive definite \cite{MN23} for every $d\ge 2$.)

In particular, for $d=2$, the scaling limit of random spherical harmonics is the so-called Berry's random wave model \cite{Ber77}, which is conjectured to be a universal
model for high-energy two-dimensional eigenfunctions of generic classically chaotic billiards.

\subsection{Nodal volumes: prior work}

The nodal set
$
T_{\ell;d}^{-1}(0) := \lbrace x\in \mathbb S^d : T_{\ell;d}(x)=0\rbrace
$
is a.s. a smooth hypersurface; we are interested in the asymptotic distribution, as $\ell\to +\infty$, of its $d-1$--Hausdorff measure
$$
\mathcal L_{\ell;d-1} := \mathcal H^{d-1}(T_{\ell;d}^{-1}(0)).
$$
 In 1985 B\'erard \cite{Be85} computed
the expectation to be proportional (by a constant factor only depending on $d$) to $\sqrt{E_{\ell;d}}$, more precisely
\begin{equation}\label{meanvol}
\frac{\mathbb E[\mathcal L_{\ell;d-1} ]}{\sqrt{E_{\ell;d}}} 
=\sqrt{\frac{1}{d}} \mathcal H^{d-1}(\mathbb S^{d-1}),
\end{equation}
in accordance with Yau's conjecture,
while in \cite{Wig09} Wigman established an upper bound for the variance of the form
\begin{equation}\label{vard}
\Var(\mathcal L_{\ell;d-1}) = O\left ( \frac{E_{\ell;d}}{\sqrt{\eta_{\ell;d}}}\right ),
\end{equation}
or equivalently $\Var(\mathcal L_{\ell;d-1})= O \left ( \ell^{-(d-5)/2} \right )$. 
In particular, \eqref{meanvol} and \eqref{vard} yield a concentration result for the nodal volume in the high-energy limit: indeed, Markov inequality and \eqref{dimeig} ensure that  $\mathcal L_{\ell;d-1} / \sqrt{E_{\ell;d}}$ converges in probability to a constant only depending on $d$ (that is, the right-hand side of \eqref{meanvol}), as $\ell\to +\infty$.

For $d=2$, in \cite{W10} a finer analysis has been carried out by means of the Kac-Rice formula, leading to the following asymptotic law for the variance of the nodal length
\begin{equation}\label{varIgor}
\Var(\mathcal L_{\ell;1}) \sim \frac{1}{32}\log\sqrt{E_{\ell;2}},\qquad \ell\to +\infty,
\end{equation}
which in particular proves Berry's cancellation phenomenon on the sphere \cite{Ber02}.  Here and in the sequel, by Berry's cancellation phenomenon we mean that the variance for the nodal volume is one order of magnitude smaller, as the eigenvalues diverge, than the ``natural" rate, given by the variance of the volume of the level set $\lbrace x\in \mathbb S^d:T_{\ell;d}(x)=u\rbrace$ at any non-zero threshold $u \neq 0$.

Second order fluctuations for $\mathcal L_{\ell;1}$ were investigated in \cite{MRW20}, where a Central Limit Theorem was established: as $\ell\to +\infty$,
\begin{equation*}
\tilde{\mathcal L}_{\ell;1} := \frac{\mathcal L_{\ell;1}-\mathbb E[\mathcal L_{\ell;1}]}{\sqrt{\Var(\mathcal L_{\ell;1})}} \mathop{\longrightarrow}^d Z\sim \mathcal N(0,1).
\end{equation*}
Actually a stronger result was proved, namely the asymptotic equivalence of the nodal length with the so-called sample trispectrum: as $\ell\to +\infty$,
\begin{equation}\label{asympequiv}
\mathbb E\left [\left  | \tilde{\mathcal L}_{\ell;1} - \tilde{\mathcal M}_{\ell} \right |^2\right ] = O\left (\frac{1}{\log \ell} \right ),
\end{equation}
where, denoting by $H_4(t)=t^4 - 6t^2 + 3$, $t\in \mathbb R$ the $4$-th Hermite polynomial,
\begin{equation*}
\mathcal M_\ell := -\frac{1}{4} \sqrt{\frac{E_{\ell;2}}{2}} \frac{1}{4!}\int_{\mathbb S^2} H_4(T_{\ell;2}(x))\,dx,\qquad \tilde{\mathcal M}_\ell := \frac{\mathcal M_\ell }{\sqrt{\Var(\mathcal M_\ell)}}.
\end{equation*}

\section{Motivations and main results}

As mentioned earlier, the first goal in this paper is to achieve a drastic simplification in the so-called Hermite chaos expansion of the nodal volume in any dimension. This result has several important applications; to mention one, for $d\ge 3$, no sharp asymptotics (as $\ell\to +\infty$) for the variance of the nodal volume $\mathcal L_{\ell;d-1}$ is available (and there is no information on its limiting distribution); we will show later how to exploit our result to obtain upper bounds, that we strongly conjecture to be sharp. We shall also show how our decomposition allows to establish Berry's cancellation result in any dimension;
there are, however, some marked differences with the two-dimensional case, which we shall discuss below.

The nodal volume can be formally written as
\begin{equation*}
\mathcal L_{\ell;d-1} = \int_{\mathbb S^d} \delta_0(T_{\ell;d}(x))\| \nabla T_{\ell;d}(x)\|\,dx,
\end{equation*}
where $\delta_0$ denotes the Dirac mass at zero, and $\nabla$ the gradient operator.  More precisely, let us define for $\varepsilon >0$
\begin{eqnarray*}
	\mathcal{L}^\varepsilon_{\ell;d-1} := \int_{\mathbb{S}^d} \frac{1}{2\varepsilon}1_{[-\varepsilon, \varepsilon]}(T_{\ell;d}(x)) ||\nabla T_{\ell;d}(x)||\, dx, 
\end{eqnarray*}
then we have the following result.
\begin{lemma}\label{lemconv}
As $\varepsilon \to 0$,
$$
\mathcal{L}^\varepsilon_{\ell;d-1} \longrightarrow \mathcal{L}_{\ell;d-1}
$$
both a.s. and in $L^2(\mathbb P)$.
\end{lemma}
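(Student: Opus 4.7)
\bigskip

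\noindent\textbf{Proof proposal.} The plan is to obtain the almost sure convergence from the coarea formula combined with the Lebesgue differentiation theorem, and then to upgrade it to $L^2(\mathbb{P})$ convergence via uniform integrability arguments based on a Kac--Rice-type second moment bound.

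For the almost sure part, I would first argue that on an event of probability one the eigenfunction $T_{\ell;d}$ is smooth (it is a fixed-degree polynomial restricted to $\mathbb{S}^d$) and $0$ is a regular value of $T_{\ell;d}$, i.e.\ $\nabla T_{\ell;d}(x)\neq 0$ whenever $T_{\ell;d}(x)=0$. This is a Bulinskaya-type statement: by isotropy the joint Gaussian law of $(T_{\ell;d}(x),\nabla T_{\ell;d}(x))$ is non-degenerate with bounded density uniformly in $x\in \mathbb{S}^d$, so the expected $d$-dimensional Hausdorff measure of $\{x : T_{\ell;d}(x)=0,\ \nabla T_{\ell;d}(x)=0\}$ vanishes. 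On this good event the coarea formula gives
\begin{equation*}
\mathcal{L}^\varepsilon_{\ell;d-1} \;=\; \frac{1}{2\varepsilon}\int_{-\varepsilon}^{\varepsilon} \mathcal{H}^{d-1}\bigl(T_{\ell;d}^{-1}(u)\bigr)\, du,
\end{equation*}
and since $0$ is a regular value, the implicit function theorem shows that $u\mapsto \mathcal{H}^{d-1}(T_{\ell;d}^{-1}(u))$ is continuous at $u=0$, so Lebesgue differentiation yields the pointwise limit $\mathcal{L}_{\ell;d-1}$.

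For the $L^2$ convergence I would combine the above a.s.\ convergence with uniform integrability of $\{(\mathcal{L}^\varepsilon_{\ell;d-1})^2\}_{\varepsilon}$. Writing
\begin{equation*}
\mathbb{E}\bigl[(\mathcal{L}^\varepsilon_{\ell;d-1})^2\bigr] \;=\; \int_{\mathbb{S}^d}\!\int_{\mathbb{S}^d} \frac{1}{4\varepsilon^2}\, \mathbb{E}\!\left[\mathbf{1}_{\{|T_{\ell;d}(x)|\le\varepsilon\}}\,\mathbf{1}_{\{|T_{\ell;d}(y)|\le\varepsilon\}}\,\|\nabla T_{\ell;d}(x)\|\,\|\nabla T_{\ell;d}(y)\|\right]\! dx\,dy,
\end{equation*}
and conditioning on $(T_{\ell;d}(x),T_{\ell;d}(y))$, I would control the integrand by the joint Gaussian density of the pair times the conditional expectation of the gradient product. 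Away from the antipodal diagonal $y=\pm x$ this density is uniformly bounded by non-degeneracy of the Gegenbauer kernel, giving a $\varepsilon$-independent bound of the form $C\varepsilon^{2}/\varepsilon^{2}=C$; near the diagonal one performs the standard Taylor expansion $T_{\ell;d}(y)\approx T_{\ell;d}(x)+\langle \nabla T_{\ell;d}(x),y-x\rangle$ to see that the singularity is absorbed by the gradient non-degeneracy. The same argument gives $\sup_{0<\varepsilon\le 1}\mathbb{E}[(\mathcal{L}^\varepsilon_{\ell;d-1})^{2+\delta}]<\infty$ for some $\delta>0$, which together with a.s.\ convergence implies $L^2$ convergence by Vitali's theorem.

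The main obstacle is precisely the uniform control of the second moment near the diagonal: the factor $\varepsilon^{-2}\mathbf{1}_{\{|T_x|\le\varepsilon\}}\mathbf{1}_{\{|T_y|\le\varepsilon\}}$ blows up pointwise as $y\to x$, and one needs the joint non-degeneracy of $(T_{\ell;d}(x),T_{\ell;d}(y))$ (equivalently, of $(T_{\ell;d}(x),\nabla T_{\ell;d}(x))$ after rescaling) together with the boundedness of Gaussian conditional moments of $\|\nabla T_{\ell;d}\|$ to conclude. Since $T_{\ell;d}$ has bounded smooth covariance kernel $G_{\ell;d}$ with explicit estimates, these bounds are classical (they underlie the Kac--Rice computations of \cite{Wig09}, carried out in the limit $\varepsilon\to 0$), and transfer verbatim to the pre-limit quantities $\mathcal{L}^\varepsilon_{\ell;d-1}$.
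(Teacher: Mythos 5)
Your almost-sure argument is essentially the paper's: the coarea formula turns $\mathcal{L}^\varepsilon_{\ell;d-1}$ into the average $\frac{1}{2\varepsilon}\int_{-\varepsilon}^{\varepsilon}\mathcal{H}^{d-1}(T_{\ell;d}^{-1}(u))\,du$, and a.s.\ regularity of the level $0$ (Bulinskaya, i.e.\ \cite[Lemma 2.2]{Wig09}) plus continuity of $u\mapsto\mathcal{H}^{d-1}(T_{\ell;d}^{-1}(u))$ at $u=0$ (\cite[Lemma 2.9]{Wig09}) give the pointwise limit. One minor imprecision: what you need from Bulinskaya is that almost surely there is \emph{no} point with $T_{\ell;d}(x)=0=\nabla T_{\ell;d}(x)$, which follows because the $(d+1)$-dimensional Gaussian vector $(T_{\ell;d}(x),\nabla T_{\ell;d}(x))$ is non-degenerate and $d+1>d$; the statement that the expected $\mathcal{H}^d$-measure of that set vanishes is weaker and does not by itself give emptiness.

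The genuine gap is in the $L^2$ step. A uniform bound $\sup_{\varepsilon}\mathbb{E}[(\mathcal{L}^\varepsilon_{\ell;d-1})^2]<\infty$ together with a.s.\ convergence does \emph{not} yield $L^2$ convergence; you need uniform integrability of the squares, and your proposed fix --- that ``the same argument'' gives $\sup_\varepsilon\mathbb{E}[(\mathcal{L}^\varepsilon_{\ell;d-1})^{2+\delta}]<\infty$ --- is not justified. The Kac--Rice two-point computation you sketch produces \emph{integer} moments via multi-point correlation functions; a $(2+\delta)$-moment for non-integer $\delta$ is not accessible this way, and the third moment would require controlling the three-point correlation function near the diagonal, i.e.\ non-degeneracy of the triple $(T_{\ell;d}(x),T_{\ell;d}(y),T_{\ell;d}(z))$, a substantially harder input not available in the cited literature for the pre-limit functionals. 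The paper sidesteps all of this: from the coarea representation, $\mathcal{L}^\varepsilon_{\ell;d-1}\le\sup_{|u|\le\varepsilon_0}\mathcal{H}^{d-1}(T_{\ell;d}^{-1}(u))$ for $\varepsilon\le\varepsilon_0$, and \cite[Lemma 2.11]{Wig09} bounds the volume of \emph{any} level set of a degree-$\ell$ spherical harmonic by a quantity with finite second moment (in fact deterministically, of order $\sqrt{E_{\ell;d}}$, via integral geometry), so dominated convergence gives the $L^2$ statement at once. If you insist on a purely probabilistic route, the cleanest repair is to prove convergence (not just boundedness) of the second moments, $\mathbb{E}[(\mathcal{L}^\varepsilon_{\ell;d-1})^2]\to\mathbb{E}[\mathcal{L}_{\ell;d-1}^2]$, and invoke Scheff\'e; but that still requires uniform-in-$\varepsilon$ control of the two-point function near the diagonal, which is precisely the part you left as a remark.
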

The proof of the a.s. convergence in Lemma \ref{lemconv} follows from the co-area formula, \cite[Lemma 2.2]{Wig09} and \cite[Lemma 2.9]{Wig09}; moreover
 \cite[Lemma 2.11]{Wig09} allows to prove the convergence in $L^2(\mathbb P)$ as an application of the dominated convergence theorem.

Being a square integrable functional of a Gaussian field, the random variable $\mathcal L_{\ell;d-1}$ can be developed into Wiener chaoses, i.e., it can be written as an orthogonal series, converging in $L^2(\mathbb P)$, of the form
\begin{equation}\label{chaostot}
\mathcal{L}_{\ell;d-1} = \mathbb E[\mathcal{L}_{\ell;d-1}]+\sum_{q=1}^{+\infty} \mathcal{L}_{\ell;d-1}[q].
\end{equation}
Loosely speaking, this expansion is based on the fact that Hermite polynomials $\lbrace H_q\rbrace_{q\in \mathbb N}$ form an orthogonal basis for the space of square integrable functions on the real line w.r.t. the Gaussian density \cite{NP12}. Recall that $H_0\equiv 1$ and, for $q\ge 1$,
$$
H_q(t) = (-1)^q e^{t^2/2} \frac{d^q}{dt^q} e^{-t^2/2},\qquad t\in \mathbb R.
$$
The first few Hermite polynomials are $H_1(t)=t$, $H_2(t)=t^2-1$, $H_3(t)=t^3-3t$, $H_4(t)=t^4 - 6t^2 + 3$. More precisely, for \eqref{chaostot} we have the following result.
\begin{proposition}\label{propChaosHermite}
The Wiener-It\^o chaos decomposition \eqref{chaostot} of $\mathcal L_{\ell;d-1}$ is
\begin{eqnarray*}
\mathcal L_{\ell;d-1}[2q+1] &=& 0, \textnormal{ for } q\ge 0,\\
\mathcal L_{\ell;d-1}[2] &=& 0,\\
\mathcal L_{\ell;d-1}[2q] &= & \sqrt{ \frac{E_{\ell;d}}{d} }\sum_{p=0}^{q} \frac{\beta_{2q-2p}}{(2q-2p)!} \\
&&\times \!\!\!\! \sum_{\substack{s=(s_1,\dots, s_d) \in \mathbb{N}^d \\ s_1+\dots+s_d=p}} \frac{\alpha_{2s_1,\dots,2s_d}}{(2s_1)!\dots(2s_d)!} \int_{\mathbb{S}^d} H_{2q-2p}(T_{\ell;d}(x)) \prod_{j=1}^{d} H_{2s_j}(\tilde{\partial}_j T_{\ell;d}(x))\,dx,
\end{eqnarray*}
for  $q\ge 2$,
where $\tilde{\partial}_j T_{\ell;d}(x)=\frac{\partial_j T_{\ell;d}(x)}{\sqrt{\ell(\ell+d-1)/d}} $,
$\beta_{2q-2p}= \frac{1}{\sqrt{2\pi} } H_{2q-2p}(0)$ and, for $s=(s_1,\dots, s_d)\in \mathbb{N}^d$,
\begin{eqnarray}
\alpha_{2s_1,\dots,2s_d}= &&\sum_{i=0}^{\infty} \frac{1}{i!2^i} \frac{\sqrt{2}\Gamma(\frac{d}{2}+i+\frac{1}{2})}{\Gamma( \frac{d}{2}+i)} \nonumber \\
&\times& \sum_{\substack{j_1+\dots+j_d=i\\ j_1\le s_1, \dots, j_d\le s_d}} \binom{i}{j_1,\dots,j_d} \frac{(-1)^{s_1-j_1+\dots+s_d-j_d}(2s_1)!\dots(2s_d)!}{(s_1-j_1)!\dots(s_d-j_d)! 2^{ s_1-j_1+\dots+s_d-j_d}}.\nonumber \\
\label{adef}
\end{eqnarray}
\end{proposition}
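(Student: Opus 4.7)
My strategy is to start from the $\varepsilon$-approximation $\mathcal{L}^{\varepsilon}_{\ell;d-1}$ of Lemma~\ref{lemconv}, expand its integrand as a product of two \emph{independent} Hermite/multivariate-Hermite series --- one in the field value, one in the normalised gradient --- and then pass to the limit $\varepsilon\to 0$ in $L^{2}(\mathbb{P})$. The key analytic input is that at every fixed $x\in\mathbb{S}^{d}$ the scalar $T_{\ell;d}(x)$ is independent of the gradient vector $\nabla T_{\ell;d}(x)$, a standard consequence of isotropy; moreover, in any orthonormal frame of $T_{x}\mathbb{S}^{d}$ the components $\partial_{j}T_{\ell;d}(x)$ are i.i.d.\ centred Gaussian with variance $E_{\ell;d}/d$, so that the normalised coordinates $\tilde{\partial}_{j}T_{\ell;d}(x)$ are i.i.d.\ standard Gaussian and the factor $\sqrt{E_{\ell;d}/d}$ can be pulled once and for all out of $\|\nabla T_{\ell;d}(x)\|$.

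The two expansions are then carried out separately. For the indicator,
\[
\frac{1}{2\varepsilon}\mathbf{1}_{[-\varepsilon,\varepsilon]}(t)=\sum_{k\ge 0}\frac{\beta_{2k}^{\varepsilon}}{(2k)!}H_{2k}(t),\qquad \beta_{2k}^{\varepsilon}\longrightarrow \tfrac{1}{\sqrt{2\pi}}H_{2k}(0)=\beta_{2k}\ \text{as}\ \varepsilon\to 0,
\]
with odd coefficients vanishing by parity. For the function $z\mapsto\|z\|$ on $\mathbb{R}^{d}$, I expand it in the tensorised Hermite basis with respect to the standard Gaussian measure on $\mathbb{R}^d$; once again only even multi-indices survive (by $z\mapsto -z$ symmetry), the coefficient of $\prod_{j}H_{2s_{j}}(z_{j})/\prod_j(2s_{j})!$ being, by definition, $\alpha_{2s_{1},\dots,2s_{d}}=\mathbb{E}\bigl[\|Z\|\prod_{j}H_{2s_{j}}(Z_{j})\bigr]$ with $Z\sim\mathcal{N}(0,I_{d})$. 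The explicit form \eqref{adef} follows from the fractional-moment representation $\|z\|=c\int_{0}^{\infty}(1-\exp(-\lambda\|z\|^{2}/2))\lambda^{-3/2}d\lambda$: substituting it into the defining expectation, evaluating the resulting one-dimensional Gaussian integrals via the Hermite generating function, and collecting multinomial contributions produces exactly the inner sum in \eqref{adef} and the $\Gamma$-ratio arising from the radial integral.

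Multiplying the two expansions and using pointwise independence, the integrand decomposes into an orthogonal sum of products $H_{2q-2p}(T_{\ell;d})\prod_{j}H_{2s_{j}}(\tilde{\partial}_{j}T_{\ell;d})$ of total Wiener order $(2q-2p)+2|s|$; pooling the summands with $|s|=p$ and integrating against $dx$ delivers the stated expression for $\mathcal{L}^{\varepsilon}_{\ell;d-1}[2q]$. Passing to the limit $\varepsilon\to 0$ is legitimate because the projection onto each fixed Wiener chaos is continuous in $L^{2}(\mathbb{P})$ and Lemma~\ref{lemconv} supplies the required convergence, allowing the replacement of $\beta_{2k}^{\varepsilon}$ by $\beta_{2k}$; odd chaoses vanish identically, since both series contain only even-order polynomials.

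The delicate point is the vanishing of the second chaos ($q=1$). The candidate contribution, namely $\sqrt{E_{\ell;d}/d}\bigl[\tfrac{\beta_{2}\alpha_{0,\dots,0}}{2}\int H_{2}(T_{\ell;d})\,dx+\tfrac{\beta_{0}}{2}\sum_{j}\alpha_{2\mathbf{e}_{j}}\int H_{2}(\tilde{\partial}_{j}T_{\ell;d})\,dx\bigr]$, does not cancel pointwise. The global identity one needs is Green's formula combined with the eigenvalue equation $\Delta_{d}T_{\ell;d}=-E_{\ell;d}T_{\ell;d}$, which yields $\sum_{j}\int_{\mathbb{S}^{d}}(\partial_{j}T_{\ell;d})^{2}dx=E_{\ell;d}\int_{\mathbb{S}^{d}}T_{\ell;d}^{2}dx$, equivalently $\sum_{j}\int H_{2}(\tilde{\partial}_{j}T_{\ell;d})\,dx=d\int H_{2}(T_{\ell;d})\,dx$. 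Combined with the elementary calculations $\beta_{0}=-\beta_{2}=1/\sqrt{2\pi}$, $\alpha_{0,\dots,0}=\mathbb{E}\|Z\|$, and (by $\mathrm{SO}(d)$-invariance of $\|z\|$) $\alpha_{2\mathbf{e}_{j}}=\mathbb{E}[\|Z\|(Z_{j}^{2}-1)]=\mathbb{E}\|Z\|/d$, the two terms cancel exactly: this is the spherical counterpart of the Berry cancellation phenomenon discussed after \eqref{varbound}.
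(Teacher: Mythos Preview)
Your proof is correct and follows essentially the same route the paper points to (Lemma~\ref{lemconv} combined with the adaptation of \cite[Lemma~4.1]{C19}): expand $\tfrac{1}{2\varepsilon}\mathbf 1_{[-\varepsilon,\varepsilon]}$ and $\|z\|$ separately in Hermite polynomials, use pointwise independence of $T_{\ell;d}(x)$ and $\nabla T_{\ell;d}(x)$ to multiply the two series, integrate, and pass to the $L^2$ limit via continuity of chaos projections; the second-chaos cancellation through Green's identity is exactly the mechanism used later in the paper (cf.\ the proof of Corollary~\ref{BerryCancel}).

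One minor remark: your fractional-moment representation $\|z\|=c\int_0^\infty(1-e^{-\lambda\|z\|^2/2})\lambda^{-3/2}d\lambda$, once you compute $\mathbb E[e^{-\lambda Z_j^2/2}H_{2s_j}(Z_j)]=(1+\lambda)^{-1/2}\tfrac{(2s_j)!}{2^{s_j}s_j!}\tfrac{(-\lambda)^{s_j}}{(1+\lambda)^{s_j}}$ and perform the resulting Beta integral in $\lambda$, lands you directly on the \emph{closed form} of Lemma~\ref{lem:as} rather than on the series \eqref{adef}. The form \eqref{adef} as stated is most naturally obtained by the alternative route of expanding each $H_{2s_j}$ into monomials and evaluating $\mathbb E[\|Z\|\prod_k Z_k^{2j_k}]$ in polar coordinates (the $\Gamma$-ratio then comes from the radial moment $\mathbb E[R^{1+2i}]$ and the multinomial structure from the angular integral). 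Both computations are equivalent, so this does not affect correctness, but your description of how \eqref{adef} arises should be adjusted accordingly.
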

We anticipate that the above mentioned Berry's cancellation phenomenon is equivalent to the disappearence of the second chaotic component, that is, $\mathcal L_{\ell;d-1}[2]=0$.

The proof of Proposition \ref{propChaosHermite} follows from Lemma \ref{lemconv} and an adaptation of the proof of  \cite[Lemma 4.1]{C19} to the spherical case, hence we omit the details for the sake of brevity. A careful inspection of the previous formulae and a standard combinatorial argument reveals that there are $(d+1)^{2q}$ summands in the chaos of order $2q$.

 Our first main result is a reformulation of the Wiener-It\^o development of $\mathcal L_{\ell;d-1}$ in Proposition \ref{propChaosHermite} in terms of Laguerre polynomials.

 \subsection{Laguerre expansion}

The closed form of Laguerre polynomials $\lbrace L_n\rbrace_{n\in \mathbb N}$ is
	$$
	L_n(t)=\sum_{i=0}^{n} \binom{n}{i} (-1)^i  \frac{t^i}{i!},\qquad t\in \mathbb R.
	$$
The first Laguerre polynomials are
	$L_0\equiv 1,
	L_1(t)=-t+1,
	L_2(t)=\frac{1}{2}(t^2-4t+2)$.
\begin{definition}\label{defLaguerre}
	For $\alpha\in \mathbb R$, the generalized Laguerre polynomials $\lbrace L_n^{(\alpha)}\rbrace_{n\in \mathbb N}$ are defined as
	$$L_0^{(\alpha)}\equiv1,\qquad L_1^{(\alpha)}(t):=1+\alpha-t, \quad t\in \mathbb R,$$
	and for any $k\in \mathbb N$, $k \geq 1$,
	$$
	L_{k+1}^{(\alpha)}(t):=\frac{(2k+1+\alpha-t)L_k^{(\alpha)}(t)-(k+\alpha)L_{k-1}^{(\alpha)}(t)}{k+1},\qquad t\in \mathbb R.
	$$
\end{definition}
The closed form for the generalized Laguerre polynomials of degree $n$ is
	$$L_n^{(\alpha)}(t)=\sum_{i=0}^{n} (-1)^i \binom{n+\alpha}{n-i} \frac{t^i}{i!},\qquad t\in \mathbb R.$$
The simple Laguerre polynomials are the special case $\alpha=0$ of the generalized Laguerre polynomials: $L_n^{(0)}=L_n$.
The first few generalized Laguerre polynomials are:
$
L_0^{(\alpha)}\equiv 1,
L_1^{(\alpha)}(t)=-t+(\alpha+1),
L_2^{(\alpha)}(t)=\frac{t^2}{2}-(\alpha+2)t+\frac{(\alpha+1)(\alpha+2)}{2}.
$

We are in a position to state the first main result of this paper.
 \begin{theorem}\label{thm:laguerre}
 For $q\in \mathbb N$, $q\ge 2$,
	 	\begin{eqnarray*}
	\mathcal{L}_{\ell;d-1}[2q]
	 		&=& \sqrt{ \frac{E_{\ell;d}}{d} } \int_{\mathbb{S}^d} p_{2q}(T_{\ell;d}(x), ||\widetilde{\nabla} T_{\ell;d}(x)||)\,dx,
	 	\end{eqnarray*}
	 	where for $r,t\in \mathbb R$
		\begin{equation}\label{pol2q}
		p_{2q}(r,t) = \sum_{p=0}^{q} \frac{\beta_{2q-2p}}{(2q-2p)!} \sqrt{2}C(d,p) 2^{q-p} (-1)^{q-p}(q-p)!
	 		 L_{q-p}^{(-1/2)}\left(\frac{r^2}{2}\right) L_{p}^{(d/2-1)}\left(\frac{t^2}{2}\right),
					\end{equation}
		and
	 	\begin{equation}\label{Cdef}
		C(d,p):=\sum_{i=0}^{\infty} (-1)^i \frac{\Gamma(\frac{d}{2}+i+\frac{1}{2})}{\Gamma( \frac{d}{2}+i)} \binom{p}{i} = - \frac{\Gamma(\frac{d+1}{2})\Gamma(\frac{1}{2}(2p-1))}{2\sqrt{\pi} \Gamma(\frac{1}{2}(d+2p))},
		\end{equation}
		$\lbrace L_n^{(\alpha)}\rbrace_{n\in \mathbb N}$ being the generalized Laguerre polynomials in Definition \ref{defLaguerre}.
	 	 \end{theorem}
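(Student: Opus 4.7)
The plan is to bypass Proposition~\ref{propChaosHermite} entirely and expand the integrand of the approximating functional $\mathcal L^\varepsilon_{\ell;d-1}$ from Lemma~\ref{lemconv} directly in a basis adapted to the pair $(T_{\ell;d}(x),\|\widetilde{\nabla} T_{\ell;d}(x)\|)$, then pass to the limit $\varepsilon\to 0$. At each fixed $x\in\mathbb S^d$, isotropy of $T_{\ell;d}$ yields $\mathbb E[T_{\ell;d}(x)\partial_j T_{\ell;d}(x)]=\tfrac12\partial_j\mathbb E[T_{\ell;d}(x)^2]=0$ and $\mathbb E[\partial_i T_{\ell;d}(x)\partial_j T_{\ell;d}(x)]=(E_{\ell;d}/d)\delta_{ij}$, so $T_{\ell;d}(x)\sim\mathcal N(0,1)$ is independent of $\widetilde{\nabla} T_{\ell;d}(x)\sim\mathcal N(0,I_d)$, and $\|\nabla T_{\ell;d}(x)\|=\sqrt{E_{\ell;d}/d}\,\|\widetilde{\nabla} T_{\ell;d}(x)\|$ factors the eigenvalue out cleanly.

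Exploiting this pointwise independence, I expand the two scalar factors of $\mathcal L^\varepsilon_{\ell;d-1}(x)$ in orthogonal bases tailored to their respective laws. The even function $\frac{1}{2\varepsilon}\mathbf 1_{[-\varepsilon,\varepsilon]}(r)$ admits a Hermite expansion $\sum_{m\ge 0}(a_m^\varepsilon/(2m)!)H_{2m}(r)$ in $L^2(\phi)$ with $a_m^\varepsilon\to\beta_{2m}$ as $\varepsilon\to 0$. Setting $u:=\|\widetilde{\nabla} T_{\ell;d}(x)\|^2/2$, which has the $\mathrm{Gamma}(d/2,1)$ law, the generalized Laguerre polynomials $\{L_p^{(d/2-1)}\}_{p\ge 0}$ form an orthogonal basis of $L^2$ for this weight, so I write $\sqrt{2u}=\sum_{p\ge 0}c_p\,L_p^{(d/2-1)}(u)$ for explicit coefficients $c_p$. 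By independence, the products $H_{2m}(T_{\ell;d}(x))\,L_p^{(d/2-1)}(u)$ form an orthogonal system in $L^2(\mathbb P)$ and the joint expansion of $\mathcal L^\varepsilon_{\ell;d-1}(x)$ is the product of the two one-variable expansions; finally, the classical identity $H_{2m}(r)=(-2)^m m!\,L_m^{(-1/2)}(r^2/2)$ converts the Hermite factor to Laguerre form, producing the structure of \eqref{pol2q} up to identification of $c_p$.

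The coefficient $c_p$ is computed by projecting $\sqrt{2u}$ onto $L_p^{(d/2-1)}$ against the $\mathrm{Gamma}(d/2,1)$ weight, using the explicit closed form of $L_p^{(d/2-1)}$ and evaluating the resulting Gamma integrals. After cancelling $\Gamma(p+d/2)$ with the binomial factor $\binom{p+d/2-1}{p-i}$ and simplifying, one finds
\begin{equation*}
c_p=\sqrt{2}\sum_{i=0}^{p}(-1)^i\binom{p}{i}\frac{\Gamma(d/2+i+1/2)}{\Gamma(d/2+i)}=\sqrt{2}\,C(d,p),
\end{equation*}
matching the first expression in \eqref{Cdef}; the closed form on the right of \eqref{Cdef} then follows from Chu--Vandermonde applied to the hypergeometric sum ${}_2F_1(-p,\tfrac{d}{2}+\tfrac12;\tfrac{d}{2};1)$ after rewriting in Pochhammer symbols.

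It remains to grade the expansion by Wiener chaos order. The factor $L_m^{(-1/2)}(T_{\ell;d}(x)^2/2)=H_{2m}(T_{\ell;d}(x))/((-2)^m m!)$ sits in the $2m$-th chaos trivially. For the second factor, $L_p^{(d/2-1)}(u)$ is a polynomial of degree $2p$ in the Gaussian vector $\widetilde{\nabla} T_{\ell;d}(x)$; its $L^2(\mathbb P)$-orthogonality to every polynomial in $\widetilde{\nabla} T_{\ell;d}(x)$ of strictly lower degree follows by reducing to the radial projection (a polynomial of degree $<2p$ has radial projection of degree $<p$ in $u$) and invoking Laguerre orthogonality against the $\mathrm{Gamma}(d/2,1)$ weight, hence it lies in the $2p$-th chaos. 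By independence, the product lies in the $(2m+2p)$-th chaos, so collecting terms with $m+p=q$ and integrating over $\mathbb S^d$ gives the claimed formula for $\mathcal L^\varepsilon_{\ell;d-1}[2q]$, and the passage $\varepsilon\to 0$ is then handled by the $L^2(\mathbb P)$ convergence in Lemma~\ref{lemconv} together with continuity of chaos projection. The main subtlety I expect is the rigorous interchange of the infinite chaos sum, the spatial integral over $\mathbb S^d$, and the limit in $\varepsilon$, which should follow from standard Fubini-type arguments once uniform $L^2(\mathbb P)$ bounds on the finite-rank truncations (in $\varepsilon$) are verified.
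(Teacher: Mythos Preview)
Your approach is correct and takes a genuinely different route from the paper. The paper begins from Proposition~\ref{propChaosHermite}, the full multi-index Hermite expansion with summands $H_{2q-2p}(T_{\ell;d}(x))\prod_{j=1}^d H_{2s_j}(\tilde\partial_j T_{\ell;d}(x))$, and then collapses it in two combinatorial steps: Lemma~\ref{lem:as} uses Vandermonde's identity to reduce the coefficient $\alpha_{2s_1,\dots,2s_d}/((2s_1)!\cdots(2s_d)!)$ to $(-1)^p\sqrt 2\,2^{-p}C(d,p)/(s_1!\cdots s_d!)$, and Lemma~\ref{lem:laguerre} applies the Laguerre summation theorem $\sum_{m_1+\cdots+m_r=n}\prod_k L_{m_k}^{(\alpha_k)}(x_k)=L_n^{(\alpha_1+\cdots+\alpha_r+r-1)}(x_1+\cdots+x_r)$ (with each $\alpha_k=-1/2$) to contract the sum over $s$ into $(-2)^pL_p^{(d/2-1)}(\|t\|^2/2)$. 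You instead bypass Proposition~\ref{propChaosHermite} entirely by expanding the integrand of $\mathcal L^\varepsilon_{\ell;d-1}$ directly in the tensor basis $H_{2m}\otimes L_p^{(d/2-1)}$ adapted to the law of $(T_{\ell;d}(x),\|\widetilde\nabla T_{\ell;d}(x)\|^2/2)\sim \mathcal N(0,1)\otimes\mathrm{Gamma}(d/2,1)$, compute $c_p=\sqrt2\,C(d,p)$ via a single Gamma integral, and recover the chaos grading from the radial-projection argument. Your route is more conceptual and avoids the multi-index bookkeeping (neither Vandermonde nor the Laguerre addition formula is needed). The paper's route, while heavier, has the side benefit that its intermediate Hermite--Laguerre form (Lemma~\ref{th:hermite}), with the partial derivatives still separated, is precisely what feeds into the proof of Proposition~\ref{PropVarFormula}, where the diagram formula is applied entry by entry to the sparse covariance matrix~\eqref{covmatrix}; from your endpoint one would have to re-expand $L_p^{(d/2-1)}(\|\widetilde\nabla T_{\ell;d}\|^2/2)$ back into univariate Hermite factors to get there.
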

		 The proof of Theorem \ref{thm:laguerre} will be given in Section \ref{sec:proofThmLaguerre}.
		 \begin{remark}\label{remThm1}\rm
Theorem \ref{thm:laguerre} entails that every (non null) chaotic component $\mathcal L_{\ell;d-1}[2q]$ (see Proposition \ref{propChaosHermite}) in \eqref{chaostot} of the nodal volume can be written as
the integral over the (hyper)sphere of an explicit bivariate polynomial ($p_{2q}$ in \eqref{pol2q}), evaluated at the field $T_{\ell;d}$ and the norm of its (normalized) gradient $\|\widetilde{\nabla} T_{\ell;d} \|$.
In particular, the spherical random field
$$
\mathbb S^d\ni x\mapsto p_{2q} (T_{\ell;d}(x), \|  \widetilde{\nabla} T_{\ell;d}(x) \| )$$
is \emph{isotropic}.
\end{remark}

\begin{remark} \rm A careful inspection of the proof reveals that it is also possible to establish a \emph{Hermite-Laguerre expansion} for the nodal volume, which takes the following form:
\begin{eqnarray*}
	\mathcal{L}_{\ell;d-1}[2q]&=& \sqrt{ \frac{\ell(\ell+d-1)}{d} }\sum_{p=0}^{q}\frac{\beta_{2q-2p}}{(2q-2p)!}  \sqrt{2} C(d,p)\\&&
	\times \int_{\mathbb{S}^d} H_{2q-2p}(T_{\ell;d}(x)) L_p^{(d/2-1)}\left(\frac{||\tilde{\nabla} T_{\ell;d}(x)||^2}{2}\right) \,dx.
\end{eqnarray*}
As before, we stress that here we obtain a drastic complexity reduction with respect to the standard Hermite expansion, as the number of terms involved grows linearly with $q$ and is constant in the dimension $d$.
\end{remark}

As we mentioned in the Introduction, the previous results are only based on the analytic expression for the expansion of the nodal volume into Hermite polynomials at any fixed point $x \in \mathbb{S}^d$. The coefficients in this expansion are universal for any isotropic Gaussian field - for instance, for arithmetic random fields on the standard flat $d$-dimensional torus $\mathbb{T}^d$ for $d\ge 2$ (see \cite[(3.54)]{Not23} for the case of nodal intersections on the $3$-dimensional standard flat torus). As a consequence, the Laguerre expansion holds in much greater generality than for random eigenfunctions -  other possible applications are left for future research.

\subsection{Variance of the nodal volume}

From Proposition \ref{propChaosHermite} and Theorem \ref{thm:laguerre},
\begin{eqnarray}
\Var(\mathcal L_{\ell;d-1}) &=& \sum_{q=2}^{+\infty} \Var(\mathcal L_{\ell;d-1}[2q])\nonumber \\
&=& \frac{E_{\ell;d}}{d}\sum_{q=2}^{+\infty}   \int_{(\mathbb{S}^d)^2} \mathbb E[p_{2q}(T_{\ell;d}(x), ||\widetilde{\nabla} T_{\ell;d}(x)||)p_{2q}(T_{\ell;d}(y), ||\widetilde{\nabla} T_{\ell;d}(y)||)]\,dx dy.\nonumber \\ \label{doubleint}
\end{eqnarray}
Thanks to Remark \ref{remThm1}, the quantity
$\mathbb E[p_{2q}(T_{\ell;d}(x), ||\widetilde{\nabla} T_{\ell;d}(x)||)p_{2q}(T_{\ell;d}(y), ||\widetilde{\nabla} T_{\ell;d}(y)||)]$ only depends on $d(x,y)$ (and $\ell$, $q$); this simple observation leads to a key simplification. Indeed, in order to evaluate the double integral \paref{doubleint} on $\mathbb S^d$, we may restrict ourselves to points on the ``meridian''
(in spherical coordinates, the set of points such that $\theta_2=\dots = \theta_{d-1}=\varphi =0$), points for which the covariance matrix of the
Gaussian vector $(T_{\ell;d}(x), \widetilde{\nabla} T_{\ell;d}(x), T_{\ell;d}(y), \widetilde{\nabla} T_{\ell;d}(y))$ becomes \emph{sparse} (see \eqref{covmatrix}).
Thus we are able to provide an explicit formula for the variance of \emph{each} chaotic component of the nodal volume in terms of cross moments of Gegenbauer polynomials and their first two derivatives, with the help of the diagram formula for cross moments of Hermite polynomials (see \cite[Lemma 5.2]{CGR22} and \cite[Section 4.3.1]{MP11}).
\begin{proposition}\label{PropVarFormula}
For $q\in \mathbb N$, $q\ge 2$,
\begin{eqnarray*}
&&\Var(\mathcal L_{\ell;d-1}[2q]) = \frac{E_{\ell;d}}{d} \sum_{p,p'=0}^{q} \frac{\beta_{2q-2p}}{(2q-2p)!}\frac{\beta_{2q-2p'}}{(2q-2p')!}
	\frac{(-1)^p\sqrt{2}}{2^p}C(d,p)\frac{(-1)^{p'}\sqrt{2}}{2^{p'}}C(d,p')\cr
	&&\times \sum_{\substack {s_1,s_2,\dots, s_d \in \mathbb{N}\\  s_1+\dots+s_d=p}} \frac{(2s_1)!(2(p'-s_2-\dots-s_d))!}{s_1!(p'-s_2-\dots-s_d)!}\frac{((2s_2)!)^2\cdots ((2s_d)!)^2}{(s_2!)^2 \cdots (s_d!)^2}\times (2q-2p)! (2q-2p')!  \cr
	&&\times 2 \mathcal H^{d}(\mathbb S^d) \mathcal H^{d-1}(\mathbb S^{d-1})\int_{0}^{\pi/2} d\theta \sin^{d-1} \theta \sum_{i=2}^d\frac{\Big (\big ( \frac{\ell(\ell+d-1)}{d} \big )^{-1} G'_{\ell;d}(\cos \theta)\Big )^{2s_{i}}}{(2s_i)!}\cr
	&&\times \sum_{ k\in \mathbb N^{4} : k\in \mathcal A_{q,p,p'}} \frac{G_{\ell;d}(\cos \theta)^{ k_{12}}}{ k_{12}!} \times \frac{(-\big ( \frac{\ell(\ell+d-1)}{d} \big )^{-\frac{1}{2}} (\sin \theta)G'_{\ell;d}(\cos \theta))^{ k_{1, d+3}}}{ k_{1,d+3}!}\cr
	&& \times \frac{( \big ( \frac{\ell(\ell+d-1)}{d} \big )^{-\frac{1}{2}} \sin \theta G'_{\ell;d}(\cos \theta))^{\ k_{2, 3}}}{  k_{2,3}!} \times  \cr
	&&\times \frac{\Big( \big ( \frac{\ell(\ell+d-1)}{d} \big )^{-1} G_{\ell;d}'(\cos\theta)\cos \theta - G_{\ell;d}''(\cos \theta)\sin^2 \theta\Big )^{ k_{3,d+3}}}{ k_{3,d+3}!},
		\end{eqnarray*}
		where
		\begin{align}\label{conditionsA}
		\mathcal A _{q,p,p'}:=\lbrace &k\in \mathbb N^{4}:  k_{1,2} + k_{1,d+3} = 2q-2p,
		 k_{1,2} + k_{2,3} = 2q-2p',\nonumber \\		 &k_{2,3} + k_{3,d+3} = 2s_1, k_{1,d+3} + k_{3,d+3} = 2s'_1\rbrace.
		\end{align}
			\end{proposition}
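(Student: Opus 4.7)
The plan is to reduce the double spherical integral in \eqref{doubleint} to a one-dimensional integral on a meridian and then apply the diagram formula for Hermite polynomials. First, I would replace the Laguerre representation of $\mathcal L_{\ell;d-1}[2q]$ by the equivalent Hermite--Laguerre form stated in the Remark following Theorem \ref{thm:laguerre}, and then expand the factor $L_p^{(d/2-1)}(\|\widetilde\nabla T_{\ell;d}(x)\|^2/2)$ by the classical identity expressing $L_p^{(d/2-1)}(\|y\|^2/2)$, for $y\in\R^d$ with standard Gaussian components, as a linear combination over multi-indices $s=(s_1,\ldots,s_d)$ with $s_1+\ldots+s_d=p$ of products $\prod_{j=1}^d H_{2s_j}(y_j)$. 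This recasts $\Var(\mathcal L_{\ell;d-1}[2q])$ as a quadruple sum over $(p,p',s,s')$ of double integrals over $(\mathbb S^d)^2$ of the expectations
$$
\mathbb E\!\left[H_{2q-2p}(T_{\ell;d}(x))\prod_{j=1}^d H_{2s_j}(\widetilde\partial_j T_{\ell;d}(x))\cdot H_{2q-2p'}(T_{\ell;d}(y))\prod_{j=1}^d H_{2s'_j}(\widetilde\partial_j T_{\ell;d}(y))\right].
$$

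Next, I would exploit isotropy (cf.\ Remark \ref{remThm1}): the integrand above depends on $(x,y)$ only through the geodesic distance $\theta=d(x,y)$, so the double integral reduces to $\mathcal H^d(\mathbb S^d)\,\mathcal H^{d-1}(\mathbb S^{d-1})\int_0^{\pi}F(\theta)\sin^{d-1}\theta\, d\theta$, with $F$ computed by fixing $x$ at the north pole and taking $y$ at angular distance $\theta$ along the meridian; the symmetry $\theta\mapsto\pi-\theta$ (together with the fact that the surviving matchings below carry even total powers of $G'_{\ell;d}(\cos\theta)$) folds $\int_0^\pi$ into $2\int_0^{\pi/2}$. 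On the meridian, a direct calculation based on the metric \eqref{metricsphere} and on differentiating $G_{\ell;d}(\cos\theta)$ shows that the covariance matrix of $(T_{\ell;d}(x),\widetilde\nabla T_{\ell;d}(x),T_{\ell;d}(y),\widetilde\nabla T_{\ell;d}(y))$ is \emph{sparse}: labelling $1=T_{\ell;d}(x)$, $2=T_{\ell;d}(y)$, $3=\widetilde\partial_1 T_{\ell;d}(x)$, $d+3=\widetilde\partial_1 T_{\ell;d}(y)$ and leaving the remaining indices for $\widetilde\partial_j T_{\ell;d}(\cdot)$ with $j\ge 2$, the only non-vanishing cross covariances between $x$- and $y$-variables are the four pairs $(1,2)$, $(1,d+3)$, $(2,3)$, $(3,d+3)$ -- yielding respectively $G_{\ell;d}(\cos\theta)$, two opposite-sign copies of $(E_{\ell;d}/d)^{-1/2}\sin\theta\, G'_{\ell;d}(\cos\theta)$, and $(E_{\ell;d}/d)^{-1}G'_{\ell;d}(\cos\theta)\cos\theta-G''_{\ell;d}(\cos\theta)\sin^2\theta$, together with the ``diagonal'' pairs $(\widetilde\partial_j T_{\ell;d}(x),\widetilde\partial_j T_{\ell;d}(y))$ for $j\ge 2$, each proportional to $(E_{\ell;d}/d)^{-1}G'_{\ell;d}(\cos\theta)$.

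Finally, I would apply the diagram formula for cross moments of jointly Gaussian Hermite polynomials (\cite[Lemma 5.2]{CGR22}, \cite[Section 4.3.1]{MP11}). Sparsity forces every non-vanishing matching to use exclusively the four cross edges among the slots $\{1,2,3,d+3\}$ and the diagonal edges $(\widetilde\partial_j T_{\ell;d}(x),\widetilde\partial_j T_{\ell;d}(y))$ for $j\ge 2$. The diagonal edges compel $s_j=s'_j$ for $j\ge 2$, producing the factor $\prod_{j\ge 2}((E_{\ell;d}/d)^{-1}G'_{\ell;d}(\cos\theta))^{2s_j}/(2s_j)!$ (hence the sum over $s_2,\ldots,s_d$ in the statement) and forcing $2s'_1=2p'-2(s_2+\ldots+s_d)$. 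The multiplicities of the four cross edges are precisely the integers $k_{1,2}, k_{1,d+3}, k_{2,3}, k_{3,d+3}$, and the linear conditions in $\mathcal A_{q,p,p'}$ are nothing but the ``conservation of degree'' relations at each of the slots $1,2,3,d+3$. The remaining factorials come from the standard multinomial coefficients of the diagram formula combined with those of the Laguerre-to-multivariate-Hermite expansion. The main obstacle is essentially one of bookkeeping: carefully tracking (i) the expansion coefficients of $L_p^{(d/2-1)}(\|y\|^2/2)$ in terms of $\prod_{j}H_{2s_j}(y_j)$, (ii) the signs and powers of $(E_{\ell;d}/d)^{\pm 1/2}$ arising from differentiating $G_{\ell;d}(\cos\theta)$ via the chain rule, and (iii) the multinomial weights of the diagram formula, so that the resulting constants reassemble into the factors $\beta_{2q-2p}\,C(d,p)$, $(2s_1)!(2s'_1)!/(s_1!\,s'_1!)$, and the Gegenbauer power-type factors displayed in the statement.
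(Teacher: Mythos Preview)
Your proposal is correct and follows essentially the same route as the paper's proof: start from the multivariate Hermite form of $\mathcal L_{\ell;d-1}[2q]$ (the paper invokes Lemma \ref{th:hermite} directly, while you get there via the Hermite--Laguerre remark together with Lemma \ref{lem:laguerre} in reverse, which is equivalent), reduce the double spherical integral to the meridian by isotropy, compute the sparse covariance matrix \eqref{covmatrix}, apply the diagram formula, and use parity of Gegenbauer polynomials (\cite[(4.5.5)]{szego}) to fold $[0,\pi]$ onto $[0,\pi/2]$. Your identification of the surviving edges, the forcing $s_j=s'_j$ for $j\ge 2$, and the interpretation of $\mathcal A_{q,p,p'}$ as degree conservation at the four special slots all match the paper exactly.
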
 	
			Note that the conditions of the set \eqref{conditionsA} imply that
			\begin{equation*}
		k_{1,2} + k_{1,d+3} + k_{2,3} + k_{3,d+3} + \sum_{i=2}^{d} 2s_i  = 2q-2p + 2s_1 + \sum_{i=2}^d 2s_i = 2q-2p +2p = 2q.
		\end{equation*} 		
		The proof of Proposition \ref{PropVarFormula} is collected in Section \ref{SecVarFormula}.

A careful investigation of the asymptotic behaviour of cross moments of Gegenbauer polynomials, together with Proposition \ref{PropVarFormula}, leads to the
second main result of this paper, which in particular improves \eqref{vard}.
\begin{theorem}\label{thmvar}
Let $d\ge 3$, then
\begin{equation}\label{mainres}
 \Var(\mathcal L_{\ell;d-1})  = O \left (\ell^{-(d-2)}\right ),\qquad \ell\to +\infty.
\end{equation}
\end{theorem}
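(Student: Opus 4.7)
The starting point is Proposition \ref{PropVarFormula}: for each $q\ge 2$, $\Var(\mathcal L_{\ell;d-1}[2q])$ equals $E_{\ell;d}/d$ times a finite combinatorial sum (indexed by $p,p',(s_i)$ and the multi-index $k\in\mathcal A_{q,p,p'}$) of one-dimensional integrals on $[0,\pi/2]$ with weight $\sin^{d-1}\theta$, whose integrand is a product of normalized Gegenbauer factors $G_{\ell;d}(\cos\theta)$ and their normalized first and second derivatives, possibly premultiplied by $\sin\theta$, $\sin^2\theta$ or $\cos\theta$. The plan is to bound each such integral by $O(\ell^{-d})$ with constants that remain summable over the combinatorial indices and over $q\ge 2$; multiplication by the $E_{\ell;d}=O(\ell^2)$ prefactor then yields the claimed $O(\ell^{-(d-2)})$ rate.

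\emph{Factor count and Hilb-based bulk estimate.} The observation immediately following Proposition \ref{PropVarFormula} gives that the integrand of every summand contains exactly $2q$ normalized Gegenbauer-type factors. First I would split the $\theta$-integral at $\theta=C/\ell$. On the bulk $[C/\ell,\pi/2]$, Hilb's formula \eqref{hilbs} together with $|J_{d/2-1}(x)|\lesssim x^{-1/2}$ yields
\[
|G_{\ell;d}(\cos\theta)|\;\lesssim\;\ell^{-(d-1)/2}\,\theta^{-(d-1)/2}.
\]
Differentiating \eqref{hilbs} in $\theta$ once or twice and dividing by $(\ell(\ell+d-1)/d)^{1/2}$ or $\ell(\ell+d-1)/d$ produces the same order of magnitude for the normalized derivative factors: each differentiation brings a factor $L\sim\ell$ from the Bessel argument, exactly absorbed by the normalization, while the $\sin\theta$ or $\sin^2\theta$ prefactors in Proposition \ref{PropVarFormula} compensate the inverse powers of $\sin\theta$ arising when inverting the $(\sin\theta)^{d/2-1}$ on the left of \eqref{hilbs}.

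\emph{Integration on the bulk and treatment of the pole.} Multiplying $2q$ factors of order $\ell^{-(d-1)/2}\theta^{-(d-1)/2}$ by the weight $\sin^{d-1}\theta\asymp\theta^{d-1}$, the bulk integrand is $\lesssim \ell^{-q(d-1)}\theta^{-(q-1)(d-1)}$ and
\[
\int_{C/\ell}^{\pi/2}\ell^{-q(d-1)}\theta^{-(q-1)(d-1)}\,d\theta\;\lesssim\;\frac{\ell^{-d}}{(q-1)(d-1)-1}
\]
for every $q\ge 2$ and $d\ge 3$. On the pole region $[0,C/\ell]$ the crude bounds $|G_{\ell;d}^{(j)}(\cos\theta)|\le G_{\ell;d}^{(j)}(1)=O(\ell^{2j})$ make each factor $O(1)$ after normalization by $(\ell(\ell+d-1)/d)^{-j/2}$, and the weighted measure $\int_0^{C/\ell}\sin^{d-1}\theta\,d\theta=O(\ell^{-d})$ produces the same $O(\ell^{-d})$ contribution.

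\emph{Summation and main obstacle.} The above yields $\Var(\mathcal L_{\ell;d-1}[2q])\le C_q\,\ell^{-(d-2)}$, and the theorem reduces to $\sum_{q\ge 2}C_q<\infty$. This is the delicate point: the bulk estimate alone provides only a $1/q$ decay in $C_q$, which is not summable. The missing extra decay must be extracted from the combinatorial coefficients in Proposition \ref{PropVarFormula}, namely the factorial decay $\beta_{2q-2p}/(2q-2p)!=O(1/(q-p)!)$ (from $H_{2k}(0)=(-1)^k(2k)!/(2^k k!)$), the uniform boundedness of $C(d,p)$ visible from the closed form \eqref{Cdef}, and the at-most-polynomial (in $q$) cardinality of $\mathcal A_{q,p,p'}$; together these should yield a summable sequence once absolute values are tracked carefully through the $p,p',(s_i),k$ sums. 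Establishing this uniformly in $\ell$ is the main technical hurdle; once it is overcome, the multiplication by the $E_{\ell;d}=O(\ell^2)$ prefactor completes the proof.
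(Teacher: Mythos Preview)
Your architecture matches the paper's (split at $\theta\sim 1/\ell$, Hilb on the bulk, sum over $q$), and you correctly identify summability over $q$ as the crux; but the mechanism you propose for it does not work, and the paper uses a different device. On the bulk $[C/\ell,\pi/2]$ the paper does \emph{not} integrate all $2q$ Hilb factors. Instead it first bounds each normalized Gegenbauer factor in sup-norm by $1-\varepsilon$ (for $C$ large), extracting a \emph{geometric} $(1-\varepsilon)^{2q-4}$, and then integrates only four remaining factors via Lemma~\ref{lem4} to get $O(\ell^{-d})$. Summation then combines the diagram bound $(2q-2p)!(2q-2p')!\prod_i(2s_i)!(2s'_i)!\sum_k\prod 1/k_{ij}!\le (2q)!$, the boundedness of $\alpha_{2s}^2\beta_{2c}^2/\big((2s)!(2c)!\big)$, and the multinomial estimate $(2q)!/\prod(2s_i)!(2c)!\le (d+1)^{2q}$; the upshot is that the absolute combinatorial sum at level $q$ behaves like $(d+1)^{2q}$, and it is the $(1-\varepsilon)^{2q}$ factor (with $\varepsilon$ chosen so that $(d+1)\sqrt{1-\varepsilon}<1$) that makes the series converge. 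Your $1/q$ from the $\theta$-integral cannot compensate exponential growth, and the sources of decay you list --- factorial smallness of $\beta_{2k}/(2k)!$, boundedness of $C(d,p)$, polynomial size of $\mathcal A_{q,p,p'}$ --- overlook the explicit $(2q-2p)!(2q-2p')!$ and $\prod(2s_i)!$ factors in Proposition~\ref{PropVarFormula}, which reinstate factorial growth.

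The pole region has the same flaw. Bounding each normalized factor by $1$ on $[0,C/\ell]$ is correct (they are correlation coefficients), and the weighted measure gives $O(\ell^{-d})$; but then you again need the absolute combinatorial sum to be summable over $q$, with no $(1-\varepsilon)$ available. The paper avoids this by not using the chaos expansion near the diagonal at all: it works directly with the Kac-Rice two-point kernel $K_{\ell;d}$ and invokes Lemma~\ref{lemIgor} (essentially \cite[Lemma~4.4]{Wig09}) to handle the singular range in one stroke, reserving the chaos decomposition for the bulk where the geometric decay is available.
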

We strongly believe the upper bound in \eqref{mainres}  to be \emph{sharp}, and we leave as a topic for future research the exact asymptotic law for the variance.

  Let use denote by	$\mathcal{L}_{\ell;d-1}(u)=\mathcal{H}^{d-1}(T_{\ell;d}^{-1}(u))$ the Hausdorff measure of the excursion (upper-level) set boundary for a generic threshold level $u \in \mathbb{R}$. By an application of  Theorem \ref{thmvar}, we have that the Berry's cancellation phenomenon (\cite{Ber02}) holds on the $d$-dimensional unit sphere, for $d\ge 3$; for the two-dimensional sphere, analogous results were shown earlier by \cite{W10, MRW20}, for the flat torus in dimension two and three by \cite{MPRW16, C19}, and for the plane by \cite{NPR19}. 
\begin{corollary} \label{BerryCancel}
For any $u \neq 0$, as $\ell \rightarrow \infty$ we have
\[
\frac{\Var(\mathcal{L}_{\ell;d-1}(0))}{\Var(\mathcal{L}_{\ell;d-1}(u))}=O(\ell^{-1}).
\]
\end{corollary}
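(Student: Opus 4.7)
My plan is to bound the numerator by Theorem \ref{thmvar}, which gives $\Var(\mathcal L_{\ell;d-1}(0)) = O(\ell^{-(d-2)})$, and to establish the matching lower bound $\Var(\mathcal L_{\ell;d-1}(u)) \gtrsim \ell^{-(d-3)}$ for every fixed $u \neq 0$. By orthogonality of the Wiener chaoses, it suffices to show that a single chaotic component---conveniently, the second one---has variance of that order.

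First, I will repeat the $\varepsilon$-approximation of Lemma \ref{lemconv} and the chaos decomposition leading to Proposition \ref{propChaosHermite} and Theorem \ref{thm:laguerre}, with $\mathbf 1_{[-\varepsilon,\varepsilon]}$ replaced by $\mathbf 1_{[u-\varepsilon, u+\varepsilon]}$; this produces a $u$-dependent Laguerre expansion in which the coefficients $\beta_k = H_k(0)/\sqrt{2\pi}$ appearing in \eqref{pol2q} are replaced by $\beta_k(u) := H_k(u) e^{-u^2/2}/\sqrt{2\pi}$. Keeping only the terms with $(q,p) \in \{(1,0),(1,1)\}$ yields
\[
\mathcal L_{\ell;d-1}(u)[2] = \sqrt{E_{\ell;d}/d}\int_{\mathbb S^d}\!\Bigl(A(u)\,H_2(T_{\ell;d}(x)) + B(u)\,L_1^{(d/2-1)}\!\bigl(\tfrac12\|\widetilde{\nabla} T_{\ell;d}(x)\|^2\bigr)\Bigr)dx,
\]
with $A(u) = C(d,0)\,\beta_2(u)/\sqrt 2$ and $B(u) = \sqrt 2\, C(d,1)\,\beta_0(u)$, where $C(d,\cdot)$ is as in \eqref{Cdef}. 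The key ingredient I will invoke next is the eigenvalue identity $\Delta_d T_{\ell;d} = -E_{\ell;d} T_{\ell;d}$: integration by parts on the closed manifold $\mathbb S^d$ gives $\int\|\widetilde{\nabla} T_{\ell;d}\|^2\,dx = d\int T_{\ell;d}^2\,dx$, and hence
\[
\int_{\mathbb S^d}\!L_1^{(d/2-1)}\!\bigl(\tfrac12\|\widetilde{\nabla} T_{\ell;d}\|^2\bigr)\,dx = -\tfrac{d}{2}\int_{\mathbb S^d}\!H_2(T_{\ell;d})\,dx.
\]
Thus $\mathcal L_{\ell;d-1}(u)[2]$ collapses to the scalar multiple $A(u) - (d/2)B(u)$ of $\sqrt{E_{\ell;d}/d}\int H_2(T_{\ell;d})\,dx$.

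The final step is to compute that scalar and then the variance. A direct manipulation of the Gamma quotients in \eqref{Cdef} (using $\Gamma(-1/2) = -2\sqrt \pi$) yields $C(d,0) + d\, C(d,1) = 0$, whence $A(u) - (d/2)B(u) = C(d,0)\,u^2 e^{-u^2/2}/\sqrt{4\pi}$: the cancellation responsible for $\mathcal L_{\ell;d-1}(0)[2] = 0$ (cf.\ Proposition \ref{propChaosHermite}) is exactly first order in the shift and fails for every $u \neq 0$. Gegenbauer orthogonality
\[
\int_{\mathbb S^d\times\mathbb S^d}\!G_{\ell;d}(\cos d(x,y))^2\,dx\,dy = \frac{\mathcal H^d(\mathbb S^d)^2}{\eta_{\ell;d}} \asymp \ell^{-(d-1)},
\]
multiplied by the outer prefactor $E_{\ell;d}/d \asymp \ell^2$, will then give $\Var(\mathcal L_{\ell;d-1}(u)[2]) \asymp u^4 e^{-u^2}\,\ell^{-(d-3)}$, and combined with Theorem \ref{thmvar} the claimed $O(\ell^{-1})$ ratio will follow. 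The main obstacle is precisely the algebraic identity $C(d,0) + d\,C(d,1) = 0$: it is the incarnation of Berry's cancellation in our setting, reflecting that the two isotropic chaos-order-two integrands in $p_2$ become proportional modulo the Laplace--Beltrami equation, and one must rule out the a priori possibility that the perturbation at $u \neq 0$ still sits inside this hidden linear dependence.
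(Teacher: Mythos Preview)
Your proposal is correct and follows essentially the same route as the paper's own proof: derive the $u$-dependent second chaotic component via the Hermite--Laguerre expansion with $q=1$, collapse it to a scalar multiple of $\int_{\mathbb S^d}H_2(T_{\ell;d})\,dx$ using Green's identity (the paper's phrasing of your integration-by-parts step), evaluate the scalar---the paper computes $C(d,0)$ and $C(d,1)$ directly, which is equivalent to your identity $C(d,0)+d\,C(d,1)=0$ via $\Gamma((d+2)/2)=(d/2)\Gamma(d/2)$---and finish with Gegenbauer orthogonality for $\Var\bigl(\int H_2(T_{\ell;d})\bigr)$ combined with Theorem \ref{thmvar}. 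The ``main obstacle'' you highlight is thus a one-line Gamma-function check and not a genuine gap.
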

The proof of Corollary \ref{BerryCancel} is postponed to the Appendix \ref{AppendixA}.

As for the two-dimensional case, the heuristic explanation for Berry's cancellation is the fact that the second-order chaos term, whose variance is of exact order $\ell^{-(d-3)}$, disappears in the nodal case $u=0$.

 \begin{remark}\rm
For $d=2$, our argument would give an alternative proof of \eqref{varIgor} by means of chaotic decomposition.
There is a \emph{marked difference} between the two dimensional case and the higher dimensional setting. Indeed, in dimension two, for $q\ge 3$
\begin{equation*}
\Var(\mathcal L_{\ell;1}[2q]) = o(\Var(\mathcal L_{\ell;1}[4])),\qquad \ell\to +\infty,
\end{equation*}
thus implying $
 \Var(\mathcal L_{\ell;1}) \sim \Var(\mathcal L_{\ell;1}[4])$ as $\ell\to +\infty$ (cf. \eqref{asympequiv}),
while for $d\ge 3$ all (non null) chaotic components share the same order of magnitude with regard to the asymptotic variance.

This is in contrast with the toral case, indeed the variance of the nodal volume of the standard flat torus  is asymptotically equivalent to the variance of its fourth chaotic component not only in dimension $2$ \cite{KKW13, MPRW16} but also in dimension $3$ \cite{BM, C19}. Moreover, a non-Central Limit Theorem holds \cite{MPRW16, C19}.

We strongly believe that the second order fluctuations of the nodal volume $\mathcal L_{\ell;d-1}$ of random hyperspherical harmonics are asymptotically Gaussian also for $d\ge 3$ (for $d=2$ a CLT has been proved in \cite{MRW20}): we leave this open problem as a topic for future research.
\end{remark}

\section{Proof of Theorem \ref{thm:laguerre}}  \label{sec:proofThmLaguerre}

In this section we prove Theorem \ref{thm:laguerre}. To this aim let us start with some technical results. Recall definitions \eqref{adef} and \eqref{Cdef}.
	\begin{lemma}\label{lem:as}
	For $s=(s_1,\dots, s_d)\in \mathbb N^d$ we have
		\[\frac{\alpha_{2s_1,\dots,2s_d}}{(2s_1)!\dots(2s_d)!} 
		=\frac{(-1)^p\sqrt{2}}{2^p} \frac{1}{s_1! \cdots s_d!} C(d,p), \]	
		where $p=s_1+\dots + s_d$.
	\end{lemma}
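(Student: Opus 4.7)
The plan is to reduce the expression for $\alpha_{2s_1,\dots,2s_d}$ to the form of $C(d,p)$ by recognizing a Vandermonde convolution hidden inside the inner $j$-sum. Write $p = s_1 + \dots + s_d$ and divide through by $(2s_1)!\cdots(2s_d)!$; then in each $j$-summand the factor $(-1)^{s_1-j_1+\dots+s_d-j_d}$ equals $(-1)^{p-i}$ (using $j_1+\dots+j_d=i$), and the denominator power $2^{s_1-j_1+\dots+s_d-j_d}$ equals $2^{p-i}$. Combining with the $1/(i!\,2^i)$ prefactor, the $i$-dependent powers of $2$ collapse to $1/2^p$ and the $(-1)^{p-i}$ can be turned into $(-1)^p(-1)^i$.

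Next I would extract the combinatorial core. After cancelling the $i!$ from the multinomial coefficient $\binom{i}{j_1,\dots,j_d}$, the sum over $j$'s becomes
\[
\sum_{\substack{j_1+\dots+j_d=i\\ j_k\le s_k}} \frac{i!}{j_1!\cdots j_d!\,(s_1-j_1)!\cdots(s_d-j_d)!} \;=\; \frac{i!}{s_1!\cdots s_d!}\sum_{\substack{j_1+\dots+j_d=i\\ j_k\le s_k}} \binom{s_1}{j_1}\cdots\binom{s_d}{j_d},
\]
and by the Vandermonde--Chu convolution the inner sum equals $\binom{p}{i}$. The $i!$'s then cancel against the $1/i!$ from the original prefactor.

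Putting the pieces together, the whole expression reduces to
\[
\frac{(-1)^p\sqrt{2}}{2^p\, s_1!\cdots s_d!} \sum_{i=0}^{\infty} (-1)^i \binom{p}{i}\frac{\Gamma(\frac{d}{2}+i+\frac{1}{2})}{\Gamma(\frac{d}{2}+i)},
\]
which is precisely $\frac{(-1)^p\sqrt{2}}{2^p\,s_1!\cdots s_d!}\,C(d,p)$ by the defining series \eqref{Cdef} of $C(d,p)$. (Note that since $\binom{p}{i}=0$ for $i>p$, the apparently infinite sum is in fact finite, and all manipulations are justified without convergence issues.) The only step that requires a moment's thought is spotting the Vandermonde identity after the rewriting $1/(j_k!(s_k-j_k)!) = \binom{s_k}{j_k}/s_k!$; once this is in place the rest is bookkeeping of signs and powers of $2$.
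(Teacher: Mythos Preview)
Your proof is correct and follows essentially the same route as the paper's: both arguments factor out $(-1)^p/2^p$ by using $\sum_k(s_k-j_k)=p-i$, then recognize the inner $j$-sum as the Vandermonde convolution $\sum_{j_1+\dots+j_d=i}\binom{s_1}{j_1}\cdots\binom{s_d}{j_d}=\binom{p}{i}$ after multiplying and dividing by $s_1!\cdots s_d!$. Your write-up is slightly more explicit about the bookkeeping and adds the helpful observation that the series terminates at $i=p$, but the mathematical content is identical.
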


	\begin{proof}
		Let us consider formula \eqref{adef} and set $s_1+\dots +s_d=p$. Then multiplying and dividing by $s_1! \cdots s_d!$ we observe that
		\begin{align*}
	\alpha_{2s_1,\dots,2s_d}= \frac{(-1)^p\sqrt{2}}{2^p} &\sum_{i=0}^{\infty} (-1)^i\frac{1}{i!} \frac{\Gamma(\frac{d}{2}+i+\frac{1}{2})}{\Gamma( \frac{d}{2}+i)}\\
		& \times \sum_{\substack{j_1+\dots+j_d=i\\ j_1\le s_1, \dots, j_d\le s_d}}\frac{i!}{j_1!\dots j_d!} \frac{s_1!...s_d!}{(s_1-j_1)!\dots(s_d-j_d)! } \frac{(2s_1)!\dots(2s_d)!}{s_1!...s_d!}.
		\end{align*}
		Finally, Vandermonde's identity
		 entails that
				\[\frac{\alpha_{2s_1,\dots,2s_d}}{(2s_1)!\dots(2s_d)!}= \frac{(-1)^p\sqrt{2}}{2^p} \frac{1}{s_1!...s_d!} \sum_{i=0}^{\infty} (-1)^i \frac{\Gamma(\frac{d}{2}+i+\frac{1}{2})}{\Gamma( \frac{d}{2}+i)} \binom{p}{i},\]
		thus concluding the proof 	of Lemma \ref{lem:as} recalling \eqref{Cdef}.
		
	\end{proof}

Proposition \ref{propChaosHermite} together with Lemma \ref{lem:as} readily implies the following result, whose proof is omitted for brevity sake.
	
	\begin{lemma}\label{th:hermite} For $q\in \mathbb N$, $q\ge 2$,
		\begin{eqnarray*}
			\mathcal{L}_{\ell;d-1}[2q]	&=& 	 \sqrt{ \frac{\ell(\ell+d-1)}{d} }\sum_{p=0}^{q} \frac{\beta_{2q-2p}}{(2q-2p)!}  \frac{(-1)^p\sqrt{2}}{2^p} C(d,p)\\&&
			\times \int_{\mathbb{S}^d} H_{2q-2p}(T_{\ell;d}(x)) \sum_{s \in \mathbb{N}^d \substack s_1+\dots+s_d=p} \frac{1}{s_1! \cdots s_d!} \prod_{j=1}^{d} H_{2s_j}(\tilde{\partial}_j T_{\ell;d}(x))\,dx.
		\end{eqnarray*}
	\end{lemma}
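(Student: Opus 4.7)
The plan is that Lemma \ref{th:hermite} follows as a one-line corollary of Proposition \ref{propChaosHermite} and Lemma \ref{lem:as}: essentially all that is needed is to substitute the simplified expression for the coefficient $\alpha_{2s_1,\dots,2s_d}/\prod_j(2s_j)!$ into the Hermite chaos expansion and factor out of the inner sum over multi-indices the terms that depend only on $|s|$.

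Concretely, I would start from the formula supplied by Proposition \ref{propChaosHermite},
$$
\mathcal{L}_{\ell;d-1}[2q] = \sqrt{\tfrac{E_{\ell;d}}{d}}\sum_{p=0}^{q}\frac{\beta_{2q-2p}}{(2q-2p)!}\sum_{\substack{s\in\mathbb N^d\\ s_1+\cdots+s_d=p}}\frac{\alpha_{2s_1,\dots,2s_d}}{(2s_1)!\cdots(2s_d)!}\int_{\mathbb S^d} H_{2q-2p}(T_{\ell;d}(x))\prod_{j=1}^{d}H_{2s_j}(\tilde{\partial}_j T_{\ell;d}(x))\,dx,
$$
and recall that by definition $E_{\ell;d}=\ell(\ell+d-1)$.

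Applying Lemma \ref{lem:as}, every coefficient appearing in the inner sum can be written as
$$
\frac{\alpha_{2s_1,\dots,2s_d}}{(2s_1)!\cdots(2s_d)!} = \frac{(-1)^{p}\sqrt{2}}{2^{p}}\,C(d,p)\,\frac{1}{s_1!\cdots s_d!},
\qquad p=s_1+\cdots+s_d.
$$
The essential observation driving the reformulation is that the prefactor $(-1)^{p}\sqrt{2}\,2^{-p}\,C(d,p)$ is \emph{constant} on the index set $\{s\in\mathbb N^d:s_1+\cdots+s_d=p\}$; only the multinomial weight $1/(s_1!\cdots s_d!)$ retains dependence on the individual components of the multi-index $s$. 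Therefore the $p$-dependent prefactor can be pulled outside the inner sum, leaving exactly the combination
$$
\sum_{\substack{s\in\mathbb N^d\\ s_1+\cdots+s_d=p}}\frac{1}{s_1!\cdots s_d!}\prod_{j=1}^{d}H_{2s_j}(\tilde{\partial}_j T_{\ell;d}(x))
$$
attached to the product of Hermite polynomials in the integrand.

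Substituting and rearranging yields the stated identity verbatim. No further analytic input is required: the a.s.\ and $L^2(\mathbb P)$ convergence justifying the chaotic decomposition has already been established via Lemma \ref{lemconv} in the proof of Proposition \ref{propChaosHermite}, and the content of Lemma \ref{th:hermite} is purely combinatorial. The only nontrivial step—evaluating the inner sum in the definition of $\alpha_{2s_1,\dots,2s_d}$ by means of Vandermonde's identity to produce the binomial coefficient $\binom{p}{i}$ and hence the closed form $C(d,p)$—has already been carried out upstream in Lemma \ref{lem:as}; consequently there is no genuine obstacle to the proof of Lemma \ref{th:hermite}.
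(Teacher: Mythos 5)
Your proposal is correct and coincides with the paper's own (deliberately omitted) argument: the paper states that Lemma \ref{th:hermite} follows readily from Proposition \ref{propChaosHermite} together with Lemma \ref{lem:as}, which is exactly the substitution-and-factoring you carry out. The key observation that the prefactor $(-1)^p\sqrt{2}\,2^{-p}C(d,p)$ is constant on the slice $\{s:s_1+\cdots+s_d=p\}$ and can be pulled outside the inner sum is precisely what makes the step immediate.
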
	
	Before giving the proof of Theorem \ref{thm:laguerre}, we need another technical result.
	
	\begin{lemma}\label{lem:laguerre}
	For $t=(t_1, \dots, t_d)\in \mathbb R^d$,
	 $$
	 \sum_{s_1+...+s_d=p} \frac{1}{s_1!\cdots s_d!} \prod_{j=1}^{d} H_{2s_j}(t_j)= (-1)^{p}2^{p} L_p^{(d/2-1)}\left(\frac{||t||^2}{2}\right),
	 $$
	 where $L_p^{(d/2-1)}$ is the generalized Laguerre polynomial of order $p$ (see Definition \ref{defLaguerre}).
	\end{lemma}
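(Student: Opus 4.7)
The plan is to rewrite the left-hand side by converting each even Hermite polynomial into a Laguerre polynomial, and then invoke the classical convolution (addition) identity for generalized Laguerre polynomials to collapse the multi-index sum into a single Laguerre polynomial evaluated at $\|t\|^2/2$.

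Concretely, the first step is the standard identity (in the probabilist normalization, consistent with $H_2(x)=x^2-1$ and $H_4(x)=x^4-6x^2+3$ as in the paper)
\begin{equation*}
H_{2n}(x) \;=\; (-2)^n\, n!\, L_n^{(-1/2)}\!\left(\tfrac{x^2}{2}\right), \qquad n\ge 0,
\end{equation*}
which one can verify by induction (using the three-term recurrences for $H_n$ and $L_n^{(\alpha)}$) or directly from the generating function of the Hermite polynomials. Dividing by $n!$ yields $H_{2n}(x)/n!=(-2)^n L_n^{(-1/2)}(x^2/2)$, so that the left-hand side of the lemma becomes
\begin{equation*}
\sum_{s_1+\dots+s_d=p} \prod_{j=1}^d \frac{H_{2s_j}(t_j)}{s_j!}
\;=\;(-2)^p \sum_{s_1+\dots+s_d=p}\prod_{j=1}^{d} L_{s_j}^{(-1/2)}\!\left(\tfrac{t_j^2}{2}\right).
\end{equation*}

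The second step is the convolution identity for generalized Laguerre polynomials,
\begin{equation*}
L_n^{(\alpha+\beta+1)}(x+y) \;=\; \sum_{k=0}^n L_k^{(\alpha)}(x)\, L_{n-k}^{(\beta)}(y),
\end{equation*}
which is a classical consequence of the product of their generating functions $\sum_n L_n^{(\alpha)}(x) z^n = (1-z)^{-\alpha-1}e^{-xz/(1-z)}$. Iterating this formula $d-1$ times with $\alpha_1=\dots=\alpha_d=-\tfrac12$, the resulting upper parameter is $-d/2+(d-1)=d/2-1$, whence
\begin{equation*}
\sum_{s_1+\dots+s_d=p}\prod_{j=1}^{d} L_{s_j}^{(-1/2)}\!\left(\tfrac{t_j^2}{2}\right)
\;=\; L_p^{(d/2-1)}\!\left(\tfrac{t_1^2+\dots+t_d^2}{2}\right)
\;=\; L_p^{(d/2-1)}\!\left(\tfrac{\|t\|^2}{2}\right).
\end{equation*}
Combining the two steps gives exactly $(-1)^p 2^p L_p^{(d/2-1)}(\|t\|^2/2)$, as required.

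As a sanity check, one could also prove the lemma in one shot at the level of generating functions: multiplying both sides by $z^p$ and summing over $p\ge 0$, the left-hand side factorizes as $\prod_{j=1}^d \sum_{s\ge 0}\tfrac{H_{2s}(t_j)}{s!} z^s = (1+2z)^{-d/2}\exp\!\bigl(\tfrac{\|t\|^2 z}{1+2z}\bigr)$, and the right-hand side $\sum_p (-2z)^p L_p^{(d/2-1)}(\|t\|^2/2)$ equals the same closed form by the Laguerre generating function. This serves as a useful cross-check and bypasses the iterated convolution. The only non-routine point is to fix the correct normalization for the Hermite/Laguerre relation $H_{2n}(x)=(-2)^n n! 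L_n^{(-1/2)}(x^2/2)$, since a different convention for Hermite polynomials (physicist vs.\ probabilist) would introduce spurious powers of $2$; everything else is bookkeeping with classical identities.
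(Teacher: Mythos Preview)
Your proof is correct and follows essentially the same route as the paper: convert $H_{2n}$ to $L_n^{(-1/2)}$ via the standard identity, then collapse the multi-index sum with the Laguerre addition/convolution formula (the paper cites the $r$-variable version from Gradshteyn--Ryzhik directly, whereas you iterate the two-variable identity, which is equivalent). Your generating-function cross-check is a nice bonus not present in the paper.
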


\begin{proof}	
First of all, it is known that Laguerre and Hermite polynomials are related by the following expression
\begin{equation} \label{hermite-laguerre}
H_{2n}(x)=(-1)^n 2^n n! L_n^{(-1/2)}\left(\frac{x^2}{2}\right)
\end{equation}
			(see \cite{abramowitz} (22.5.40) combined with (22.5.18)
		). This implies that
			\begin{align}\label{hl1}
		\sum_{s_1+\dots+s_d=p} \frac{1}{s_1!\cdots s_d!} \prod_{j=1}^{d} H_{2s_j}(t_j)&= \sum_{s_1+\dots+s_d=p}(-1)^{-p} (2)^{s_1+\dots+s_d}  \frac{s_1!\dots s_d!}{s_1!\cdots s_d!} \prod_{j=1}^{d} L_{s_j}^{(-1/2)}\left(\frac{ t_j^2}{2}\right) \nonumber\\&
			 = (-2)^{p} \sum_{s_1+...+s_d=p}  \prod_{j=1}^{d} L_{s_j}^{(-1/2)}\left(\frac{t_j^2}{2}\right) .
			\end{align}
		Moreover, from the Summation Theorem 8.977, equation (1) on page 1002 of \cite{table},	we have that				
				 \[ L_n^{(\alpha_1+\dots +\alpha_r+r-1)}(x_1+\dots+x_r)= \sum_{m_1+\dots+m_r=n} L_{m_1}^{(\alpha_1)}(x_1)\cdots L_{m_r}^{(\alpha_r)}(x_r), \]		
that applied to (\ref{hl1}) gives
				 \begin{align*}
		\sum_{s_1+...+s_d=p} \frac{1}{s_1!\cdots s_d!} \prod_{j=1}^{d} H_{2s_j}(t_j)&= (-2)^{p} L_{p}^{(d/2-1)} \left(\frac{t_1^2+...+t_d^2}{2}\right) \\&= (-2)^{p} L_{p}^{(d/2-1)} \left(\frac{||t||^2}{2}\right).
				 \end{align*}

\end{proof}
	
We are now in a position to give the proof of our first main result, that is, the Laguerre expansion for the nodal volume.

\begin{proof}[Proof of Theorem \ref{thm:laguerre}]
From Proposition \ref{propChaosHermite}, Lemma \ref{lem:laguerre} together with Lemma \ref{th:hermite} entails that
\begin{eqnarray*}
	\mathcal{L}_{\ell;d-1}[2q]&=& \sqrt{ \frac{\ell(\ell+d-1)}{d} }\sum_{p=0}^{q}\frac{\beta_{2q-2p}}{(2q-2p)!}  \sqrt{2} C(d,p)\\&&
	\times \int_{\mathbb{S}^d} H_{2q-2p}(T_{\ell;d}(x)) L_p^{(d/2-1)}\left(\frac{||\tilde{\nabla} T_{\ell;d}(x)||^2}{2}\right) \,dx.
\end{eqnarray*}
Now (\ref{hermite-laguerre}) applied to $H_{2q-2p}(T_{\ell;d}(x))$ concludes the proof.


\end{proof}

\section{Proof of Proposition \ref{PropVarFormula}}\label{SecVarFormula}

The standard spherical coordinate system on $\mathbb S^d$ is
\begin{equation}\label{x}
x=\left (
\cos \theta_1,
\sin \theta_1 \cos \theta_2,
\sin \theta_1 \sin \theta_2 \cos \theta_3,\dots,
\sin \theta_1 \cdots \sin \theta_{d-1} \cos \varphi,
\sin \theta_1 \cdots \sin \theta_{d-1} \sin \varphi\right )
\end{equation}
where $\theta_i\in [0,\pi]$, $i=1,\dots, d-1$ and $\varphi\in [0,2\pi]$. Note that the geodesic distance between the ``north pole" $(1,0,\dots,0)$ and $x$ in \eqref{x} coincides with $\theta_1$.
The ``meridian" is the set of points such that $\theta_2=\dots = \theta_{d-1}=\varphi =0$.
From now on $\theta=\theta_1$, i.e., we omit the subscript in the first coordinate $\theta_1$. The standard orthonormal basis of the tangent space is given by
\begin{align*}
\frac{\partial}{\partial \theta}, (\sin \theta)^{-1} \frac{\partial}{\partial \theta_2}, \dots, (\prod_{m=1}^{a-1} \sin \theta_m)^{-1} \frac{\partial}{\partial \theta_a}, \dots, (\prod_{m=1}^{d-1}\sin \theta_m)^{-1} \frac{\partial}{\partial \varphi},
\end{align*}
where $a=2,\dots, d-1$.

\subsection{Covariance matrix on the meridian}

This section is inspired by \cite{W10}.
Let $x, y$ be points on the meridian, and $\theta_x, \theta_y$ their spherical coordinates.
We have
\begin{equation*}
\Cov(T_{\ell;d}(x), T_{\ell;d}(y)) = G_{\ell;d}(\cos d(x,y))=G_{\ell;d}(\cos |\theta_x -\theta_y|),
\end{equation*}
and moreover
\begin{align*}
\nabla_x G_{\ell;d}(\cos d(x,y)) = - \nabla_y G_{\ell;d}(\cos d(x,y)) & = (\pm \sin (|\theta_x-\theta_y|) G'_{\ell;d}(\cos |\theta_x - \theta_y|), 0_{d-1})\\
& =:\pm B(|\theta_x - \theta_y|),
\end{align*}
where $\pm$ depends whether or not $\theta_y>\theta_x$, and $0_{d-1}$ denotes the (row) zero vector of dimension $d-1$. For the Hessian we have
\begin{align*}
H_d(d(x,y)) &= G_{\ell;d}''(\cos d(x,y)) \nabla_x \cos (d(x,y)) \otimes \nabla_y \cos (d(x,y)) \\&\qquad+ G_{\ell;d}'(\cos d(x,y)) \nabla_x \otimes \nabla_y \cos d(x,y).
\end{align*}
Since on the meridian
\begin{align*}
\nabla_x \nabla_y\cos d(x,y) = \begin{pmatrix}
\cos |\phi_x-\phi_y| &0_{d-1}\\
0_{d-1}^t &I_{d-1}
\end{pmatrix},
\end{align*}
where $I_{d-1}$ is the $(d-1)\times (d-1)$ identity matrix, for the Hessian we get
\begin{align*}
& H_d(|\theta_x - \theta_y|)  = G_{\ell;d}''(\cos |\theta_x-\theta_y|) \nabla_x \cos (|\theta_x-\theta_y|) \otimes \nabla_y \cos (|\theta_x-\theta_y|) \\
 &+ G_{\ell;d}'(\cos |\theta_x-\theta_y|) \nabla_x \otimes \nabla_y \cos |\theta_x-\theta_y| \\
&= \begin{pmatrix}
G_{\ell;d}'(\cos |\theta_x-\theta_y|)\cos |\phi_x-\phi_y| - G_{\ell;d}''(\cos |\theta_x-\theta_y|)\sin^2 |\theta_x-\theta_y|&0_{d-1}\\
0_{d-1}^t &G_{\ell;d}'(\cos |\theta_x-\theta_y|)I_{d-1}
\end{pmatrix}.
\end{align*}
We have just proved the following.
\begin{lemma}
The covariance matrix $\Sigma$ of the vector $(T_{\ell;d}(x), T_{\ell;d}(y), \nabla T_{\ell;d}(x), \nabla T_{\ell;d}(y))$ for $x$ the north pole and $y$ any point on the meridian such that $d(x,y)=\theta$ is
\begin{equation}\label{covmatrix}
\Sigma = \Sigma(\theta):=\begin{pmatrix}
1 &G_{\ell;d}(\cos \theta) &0_{d} &-B(\theta)\\
G_{\ell;d}(\cos \theta) &1 &B(\theta) &0_{d}\\
0^t_d &B(\theta)^t &\frac{E_{\ell;d}}{d} I_d &H(\theta)\\
-B( \theta)^t &0_d^t &H(\theta) &\frac{E_{\ell;d}}{d} I_d
 \end{pmatrix},
\end{equation}
where $0_{d}$ denotes the (row) zero vector of dimension $d$ and $I_{d}$ is the $d\times d$ identity matrix.
\end{lemma}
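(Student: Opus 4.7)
The plan is to verify each block of $\Sigma(\theta)$ by means of the general identity, valid for smooth Gaussian fields,
\begin{equation*}
\Cov(\partial^{\alpha}_{x} T_{\ell;d}(x), \partial^{\beta}_{y} T_{\ell;d}(y)) = \partial^{\alpha}_{x}\partial^{\beta}_{y}\,G_{\ell;d}(\cos d(x,y)),
\end{equation*}
with partial derivatives read in the orthonormal tangent frames listed after \eqref{x}. Along the meridian those frames coincide with the spherical coordinate vector fields normalized via the metric \eqref{metricsphere}, so the differentiation reduces to one-variable calculus in $\theta$ together with elementary bookkeeping of the remaining coordinates.

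First I would handle the easy blocks. The top-left $2\times 2$ block follows from $\Var(T_{\ell;d})=1$ and the definition of $G_{\ell;d}$. The pointwise independence blocks $\Cov(T_{\ell;d}(x),\nabla_x T_{\ell;d}(x))=0_d$ and similarly at $y$ follow because $z\mapsto G_{\ell;d}(\cos d(x,z))$ attains its maximum at $z=x$, so its gradient vanishes there. The block $\Cov(\nabla_x T_{\ell;d}(x),\nabla_x T_{\ell;d}(x))$ is, by isotropy, a scalar multiple of $I_d$; the scalar comes from $\EE[\|\nabla T_{\ell;d}(x)\|^2]=E_{\ell;d}$, itself a consequence of $\Delta_d T_{\ell;d}=-E_{\ell;d}T_{\ell;d}$ and integration by parts on $\mathbb{S}^d$, so each diagonal entry equals $E_{\ell;d}/d$.

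For the first-order cross blocks, commuting derivative and expectation yields $\Cov(T_{\ell;d}(x),\nabla_y T_{\ell;d}(y))=\nabla_y G_{\ell;d}(\cos d(x,y))$. With $x$ the north pole, the geodesic distance $d(x,y)=\theta$ depends only on the first spherical coordinate of $y$, so only the component along $\partial/\partial\theta|_y$ survives, giving $-\sin\theta\,G'_{\ell;d}(\cos\theta)$ in the first slot and zero elsewhere; this is precisely $-B(\theta)$. Swapping $x\leftrightarrow y$ produces $+B(\theta)$ because, in the respective tangent frames along the meridian, one has $\nabla_x G_{\ell;d}(\cos d)=-\nabla_y G_{\ell;d}(\cos d)$, as one checks directly from $\cos d(x,y)=\langle x,y\rangle$.

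The main obstacle is the Hessian block
\begin{equation*}
H(\theta) = G''_{\ell;d}(\cos d)(\nabla_x\cos d)(\nabla_y\cos d)^t + G'_{\ell;d}(\cos d)\,\nabla_x\otimes\nabla_y\cos d.
\end{equation*}
The rank-one piece $(\nabla_x\cos d)(\nabla_y\cos d)^t$ was already computed in the previous step and contributes $-\sin^2\theta\,G''_{\ell;d}(\cos\theta)$ in the $(1,1)$ slot, zero elsewhere. For the remaining mixed second derivative, I would use $\cos d(x,y)=\langle x,y\rangle$ in the ambient space $\bR^{d+1}$: for $u\in T_x\mathbb{S}^d$ and $v\in T_y\mathbb{S}^d$,
\begin{equation*}
\bigl(\nabla_x\otimes\nabla_y\cos d(x,y)\bigr)(u,v) = \langle u,v\rangle_{\bR^{d+1}}.
\end{equation*}
Pairing the ambient expressions for the orthonormal tangent frames at the north pole $x=(1,0,\dots,0)$ and at $y=(\cos\theta,\sin\theta,0,\dots,0)$, one finds $\langle \partial/\partial\theta|_x,\partial/\partial\theta|_y\rangle=\cos\theta$, while the remaining $d-1$ frame vectors on each side coincide with $e_3,\dots,e_{d+1}$ (because the meridian lies in the $e_1$-$e_2$ plane, so parallel transport of the transverse directions is trivial), giving the identity on the orthogonal $(d-1)$-block. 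The mixed Hessian is therefore $\mathrm{diag}(\cos\theta,1,\dots,1)$; multiplying by $G'_{\ell;d}(\cos\theta)$ and adding the rank-one contribution yields the claimed $(1,1)$ entry $G'_{\ell;d}(\cos\theta)\cos\theta-G''_{\ell;d}(\cos\theta)\sin^2\theta$ and the $(d-1)\times(d-1)$ block $G'_{\ell;d}(\cos\theta)I_{d-1}$. Collecting the four blocks with the signs dictated by the preceding paragraphs assembles \eqref{covmatrix}.
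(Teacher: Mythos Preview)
Your proposal is correct and follows essentially the same route as the paper: both compute the blocks of $\Sigma$ by differentiating the covariance kernel $G_{\ell;d}(\cos d(x,y))$ in the spherical coordinate frame along the meridian, with the crucial step being the identification of $\nabla_x\otimes\nabla_y\cos d(x,y)$ as $\mathrm{diag}(\cos\theta,1,\dots,1)$. Your use of the ambient formula $(\nabla_x\otimes\nabla_y\langle x,y\rangle)(u,v)=\langle u,v\rangle_{\bR^{d+1}}$ to obtain this mixed Hessian is a clean justification of what the paper simply asserts, and your explicit treatment of the diagonal blocks $\Cov(T,\nabla T)=0$ and $\Cov(\nabla T,\nabla T)=\tfrac{E_{\ell;d}}{d}I_d$ fills in details the paper leaves implicit.
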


\subsection{Diagram formula}

Let $Z=(Z_1, Z_2, \dots, Z_{2d+2})$ be a $(2d+2)$ dimensional centered Gaussian vector, where $\Var(Z_i)=1$ for every $i$. We need to evaluate quantities of the form
$$
\mathbb E \Big [H_{2q-2p}(Z_1)H_{2q-2p'}(Z_2)  \prod_{j=1}^{d} H_{2s_j}(Z_{2+j}))\, \prod_{k=1}^{d} H_{2s'_k}(Z_{d+2+k}) \Big ]
$$ for $q\ge 2$, $p,p'=0,\dots q$, $s_1+\dots + s_d = p$, $s_1'+\dots + s_d'=p'$. The diagram formula (see \cite[Lemma 5.2]{CGR22} and \cite[Section 4.3.1]{MP11}) allows one to write
\begin{align}
&&\mathbb E \Big [H_{2q-2p}(Z_1)H_{2q-2p'}(Z_2)  \prod_{j=1}^{d} H_{2s_j}(Z_{2+j}))\, \prod_{k=1}^{d} H_{2s'_k}(Z_{d+2+k}) \Big ] \nonumber\\
&& = (2q-2p)! (2q-2p')! \prod_{i=1}^d (2s_i)! (2s'_i)! \sum_{\lbrace k_{ij}\rbrace_{i,j=1}^{2d+2}} \prod_{i,j=1, i<j}^{2d+2} \frac{\mathbb E[Z_i Z_j]^{k_{ij}}}{k_{ij}!},\label{diagram}
\end{align}
where
$
k_{ij}\le \min(q_i,q_j), k_{ij}\in \mathbb N, k_{ii}=0, k_{ij}=k_{ji},
$
and moreover
\begin{align*}
 \sum_{j=1}^{2d+2} k_{ij} =\begin{cases}
2q-2p,\qquad i=1\\
2q-2p',\qquad i=2\\
2s_{i-2},\qquad i=3,\dots, d+2\\
2s'_{i-d-2},\qquad i=d+3,\dots, 2d+2.
\end{cases}
\end{align*}
We are now in a position to prove the explicit formula for the variance of chaotic components of the nodal volume.

\begin{proof}[Proof of Proposition \ref{PropVarFormula}]
From Lemma \ref{th:hermite} and Theorem \ref{thm:laguerre} we can write
\begin{eqnarray*}
			&&\Var(\mathcal{L}_{\ell;d-1}[2q]	)\\&&= 	  \frac{\ell(\ell+d-1)}{d} \sum_{p,p'=0}^{q} \frac{\beta_{2q-2p}}{(2q-2p)!}  \frac{(-1)^p\sqrt{2}}{2^p} C(d,p)\frac{\beta_{2q-2p'}}{(2q-2p')!}  \frac{(-1)^{p'}\sqrt{2}}{2^{p'}} C(d,p')\cr
			&&\quad \times \sum_{\substack{s_1+\dots+s_d=p\\ s_1'+\dots + s_d'=p'}} \frac{1}{s_1! \cdots s_d!}\frac{1}{s'_1! \cdots s'_d!}\cr
			&&\quad \times \int_{\mathbb{S}^d}\int_{\mathbb{S}^d}\mathbb E[ H_{2q-2p}(T_{\ell;d}(x))  \prod_{j=1}^{d} H_{2s_j}(\tilde{\partial}_j T_{\ell;d}(x))H_{2q-2p'}(T_{\ell;d}(y))  \prod_{k=1}^{d} H_{2s_k}(\tilde{\partial}_k T_{\ell;d}(y))]\,dxdy\cr
			&&= \frac{\ell(\ell+d-1)}{d} \sum_{p,p'=0}^{q} \frac{\beta_{2q-2p}}{(2q-2p)!}  \frac{(-1)^p\sqrt{2}}{2^p} C(d,p)\frac{\beta_{2q-2p'}}{(2q-2p')!}  \frac{(-1)^{p'}\sqrt{2}}{2^{p'}} C(d,p')\cr &&\quad
			\times \sum_{\substack{s_1+\dots+s_d=p\\ s_1'+\dots + s_d'=p'}} \frac{1}{s_1! \cdots s_d!}\frac{1}{s'_1! \cdots s'_d!}
		 |\mathbb S^d||\mathbb S^{d-1}| \cr
		&& \quad\times  \int_{0}^\pi \mathbb E[ H_{2q-2p}(T_{\ell;d}(\bar x)) H_{2q-2p'}(T_{\ell;d}(y_\theta)) \prod_{j,k=1}^{d} H_{2s_j}(\tilde{\partial}_j T_{\ell;d}(\bar x))   H_{2s_k}(\tilde{\partial}_k T_{\ell;d}(y_\theta))] \sin^{d-1} \theta\,d\theta,
			\end{eqnarray*}
			where $\bar x$ is the north pole and $y_\theta$ is the point on the meridian s.t. $d(\bar x, y_\theta)=\theta$. By applying \eqref{diagram} for $Z=(T_{\ell;d}(\bar x), T_{\ell;d}(y_\theta), \nabla T_{\ell;d}(\bar x), \nabla T_{\ell;d}(y_\theta))$, it follows that
		\begin{eqnarray*}
		\Var(\mathcal{L}_{\ell;d-1}[2q]	)&=&
		\frac{E_{\ell;d}}{d} \sum_{p,p'=0}^{q} \frac{\beta_{2q-2p}}{(2q-2p)!}\frac{\beta_{2q-2p'}}{(2q-2p')!}
	\frac{(-1)^p\sqrt{2}}{2^p}C(d,p)\frac{(-1)^{p'}\sqrt{2}}{2^{p'}}C(d,p')\cr
	&& \times \sum_{s,s' \in \mathbb{N}^d \substack {s_1+\dots+s_d=p\cr
	s'_1+\dots +s'_d=p'}} \frac{1}{s_1!...s_d!}\frac{1}{s'_1!...s'_d!}\times (2q-2p)! (2q-2p')! \prod_{i=1}^d (2s_i)! (2s'_i)! \cr
	&& \times \mathcal H^{d}(\mathbb S^d) \mathcal H^{d-1}(\mathbb S^{d-1})\int_{0}^{\pi} d\theta \sin^{d-1} \theta\sum_{ k_{1,2}+ k_{1,d+3}=2q-2p} \frac{G_{\ell;d}(\cos \theta)^{k_{12}}}{ k_{12}!}\cr
	&& \times  \frac{(-\big ( \frac{\ell(\ell+d-1)}{d} \big )^{-\frac{1}{2}} (\sin \theta)G'_{\ell;d}(\cos \theta))^{ k_{1, d+3}}}{ k_{1,d+3}!}	\cr
	&&\times \sum_{ k_{2,1}+ k_{2,3}=2q-2p'} \frac{( \big ( \frac{\ell(\ell+d-1)}{d} \big )^{-\frac{1}{2}} \sin \theta G'_{\ell;d}(\cos \theta))^{ k_{2, 3}}}{  k_{2,3}!}\cr
	&& \times  \sum_{i=4}^{d+2} \sum_{ k_{i,d+i}=2s_{i-2} } \frac{\Big (\big ( \frac{\ell(\ell+d-1)}{d} \big )^{-1} G'_{\ell;d}(\cos \theta)\Big )^{ k_{i,d+i}}}{ k_{i,d+i}!}\cr
	&& \times \sum_{ k_{2,3}+ k_{3,d+3}=2s_1 } \frac{\Big( \big ( \frac{\ell(\ell+d-1)}{d} \big )^{-1} G_{\ell;d}'(\cos\theta)\cos \theta - G_{\ell;d}''(\cos \theta)\sin^2 \theta\Big )^{ k_{3,d+3}}}{ k_{3,d+3}!}.
			\end{eqnarray*}
Indeed, taking into account the sparsity of the covariance matrix $\Sigma$ in \eqref{covmatrix}, we find the conditions
		\begin{align*}
		\begin{cases}		
		k_{1,2} + k_{1,d+3} = 2q-2p \\
		 k_{1,2} + k_{2,3} = 2q-2p' \\
		 k_{2,3} + k_{3,d+3} = 2s_1 \\
		 k_{i,d+i}=2s_{i-2},\qquad i=4,\dots, d+2 \\
		k_{1,d+3} + k_{3,d+3} = 2s'_1 \\
		 k_{i,d+i} = 2s'_{i-2}, \qquad i=4,\dots, d+2.
		 \end{cases}
		\end{align*}	
			First of all, these conditions imply that
		$$
		s_i = s'_i, \qquad i=2,\dots, d,
		$$
		and moreover
		\begin{equation*}
		k_{1,2} + k_{1,d+3} + k_{2,3} + k_{3,d+3} + \sum_{i=4}^{d+2} k_{i,d+i}  = 2q-2p + 2s_1 + \sum_{i=2}^d 2s_i = 2q-2p +2p = 2q.
		\end{equation*}
		From \cite[(4.5.5)]{szego}, parity of Gegenbauer polynomials, and the above conditions, we can restrict the integration interval from $[0,\pi]$ to $[0,\pi/2]$, up to a factor $2$.
\end{proof}

		\section{Proof of Theorem \ref{thmvar}}
		
		From \cite{Wig09} we write
		\begin{align*}
		\mathbb E[\mathcal L_{\ell;d-1}^2] = \int_{(\mathbb S^d)^2} K_{\ell;d}(x,y)\,dx dy,
		\end{align*}
		where $K_{\ell;d}$ is the two-point correlation function, i..e,
		\begin{equation}
		K_{\ell;d}(x,y) = \frac{1}{(2\pi)^{d/2}\sqrt{1- G_{\ell;d}(\cos d(x,y))^2}}\mathbb E[\| \nabla T_{\ell;d}(x) \| \| \ \nabla T_{\ell;d}(y)\|].
				\end{equation}
				Hence from \eqref{doubleint}
				\begin{align*}
				\Var(\mathcal L_{\ell;d-1}) &= \int_{(\mathbb S^d)^2} (K_{\ell;d}(x,y) - c E_{\ell;d})\,dx dy=\sum_{q=2}^{+\infty} \Var(\mathcal L_{\ell;d-1}[2q]),
				\end{align*}
				where $c$ is s.t. $cE_{\ell;d} \mathcal{H}^{d}(\mathbb{ S}^{d})^2=\mathbb{ E}[\mathcal{L}_{\ell;d-1}]^2,$ i.e.
				\begin{equation*}
				c 
				= \frac{\mathcal{H}^{d-1}(\mathbb{ S}^{d-1})^2}{d \mathcal{H}^{d}(\mathbb{ S}^{d})^2}.
								\end{equation*}
				By isotropy of the kernel $K$, i.e., $K(x,y)$ only depends on $d(x,y)$, we can define (abusing notation) $K(d(x,y)):=K(x,y)$. Moreover by the change of variable
				$\theta=\psi/L$ where $L$ is still $L= \ell + (d-1)/2$, we can write
				\begin{align*}
				\int_{(\mathbb S^d)^2} K_{\ell;d}(x,y)\,dx dy = &2\mathcal H^d(\mathbb S^d) \mathcal H^{d-1}(\mathbb S^{d-1}) \int_0^{\pi/2} K_{\ell;d}(\theta) \sin^{d-1}\theta\,d\theta\\
				=& 2\mathcal H^d(\mathbb S^d) \mathcal H^{d-1}(\mathbb S^{d-1}) \frac{1}{L}\int_0^{L\pi/2} K_{\ell;d}(\psi/L) \sin^{d-1}\psi/L\,d\psi\\	
				=& 2\mathcal H^d(\mathbb S^d) \mathcal H^{d-1}(\mathbb S^{d-1})	\Big(  \frac{1}{L}\int_0^{C} K_{\ell;d}(\psi/L) \sin^{d-1}\psi/L\,d\psi
				\\
				&+  \frac{1}{L}\int_C^{L\pi/2} K_{\ell;d}(\psi/L) \sin^{d-1}\psi/L\,d\psi\Big ),					
					\end{align*}
					where $C$ is some constant $>0$. (Note that the integration interval is $[0,\pi/2]$, see \cite[Lemma 2.9]{W10}.)
					
					\begin{lemma}[Cf. Lemma 4.4 in \cite{Wig09}] \label{lemIgor}
					As $\ell\to +\infty$,
					\begin{equation*}
					\frac{1}{L}\int_0^{C}( K_{\ell;d}(\psi/L) - c E_{\ell;d})\sin^{d-1}\psi/L\,d\psi = O\left ( \ell^{-(d-2)}\right ).
										\end{equation*}
					\end{lemma}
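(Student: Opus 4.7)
The strategy adapts the near-diagonal argument of \cite[Lemma 4.4]{Wig09}, but handles the singular denominator $\sqrt{1-G_{\ell;d}^2}$ sharply via Hilb's asymptotic \eqref{hilbs}. The point is that, on the shrinking window $\theta \in [0,C/L]$, the kernel $K_{\ell;d}$ is of size $E_{\ell;d}/\theta$, while the Jacobian $\sin^{d-1}\theta$ and the prefactor $1/L$ provide enough powers of $1/\ell$ to make the full contribution $O(\ell^{-(d-2)})$ as soon as $d\ge 3$.

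First, Cauchy--Schwarz together with the isotropy identity $\mathbb{E}[\|\nabla T_{\ell;d}(x)\|^2] = E_{\ell;d}$ gives
\begin{equation*}
\mathbb{E}[\|\nabla T_{\ell;d}(x)\|\,\|\nabla T_{\ell;d}(y)\|] \le E_{\ell;d},
\end{equation*}
hence
\begin{equation*}
K_{\ell;d}(\theta) \le \frac{E_{\ell;d}}{(2\pi)^{d/2}\sqrt{1-G_{\ell;d}(\cos\theta)^2}}.
\end{equation*}
Using $|K_{\ell;d} - cE_{\ell;d}| \le K_{\ell;d} + cE_{\ell;d}$, the problem reduces to bounding
\begin{equation*}
\frac{E_{\ell;d}}{L}\int_0^C \frac{\sin^{d-1}(\psi/L)}{\sqrt{1-G_{\ell;d}(\cos(\psi/L))^2}}\,d\psi,
\end{equation*}
since the $cE_{\ell;d}$ term is bounded directly by $\sin(\psi/L)\le\psi/L$, giving $O(E_{\ell;d}/L^d) = O(\ell^{-(d-2)})$.

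The analytic core is a uniform-in-$\ell$ lower bound: there exist $\ell_0\in\mathbb{N}$ and $\gamma=\gamma(C,d)>0$ such that
\begin{equation*}
1-G_{\ell;d}(\cos(\psi/L))^2 \ge \gamma\,\psi^2, \qquad \psi\in[0,C],\ \ell\ge\ell_0.
\end{equation*}
I would establish this in two regimes and patch. For $\psi\in[0,\delta]$ with $\delta$ small, the Taylor expansion $G_{\ell;d}(\cos\theta) = 1 - \frac{E_{\ell;d}}{2d}\theta^2 + O((E_{\ell;d}\theta^2)^2)$, together with $E_{\ell;d}/L^2\to 1$, yields $1-G^2 \ge \psi^2/(2d)$ for $\ell$ large. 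For $\psi\in[\delta,C]$, Hilb's formula \eqref{hilbs} gives the uniform convergence
\begin{equation*}
G_{\ell;d}(\cos(\psi/L)) \longrightarrow \Phi(\psi) := \frac{2^{d/2-1}\Gamma(d/2)}{\psi^{d/2-1}}\,J_{d/2-1}(\psi), \qquad \ell\to+\infty;
\end{equation*}
since $\Phi$ is (the radial profile of) the characteristic function of the uniform probability measure on $\mathbb{S}^{d-1}$, one has $|\Phi(\psi)|<1$ for every $\psi>0$, so $1-\Phi^2$ is bounded below by a positive constant on the compact set $[\delta,C]$, a bound that transfers to $G_{\ell;d}(\cos(\psi/L))^2$ for $\ell$ large by uniform convergence.

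Plugging in the lower bound and using $\sin(\psi/L)\le\psi/L$,
\begin{equation*}
\frac{E_{\ell;d}}{L}\int_0^C \frac{(\psi/L)^{d-1}}{\sqrt{\gamma}\,\psi}\,d\psi = \frac{E_{\ell;d}}{\sqrt{\gamma}\,L^d}\int_0^C \psi^{d-2}\,d\psi = O\!\left(\frac{E_{\ell;d}}{L^d}\right) = O\!\left(\ell^{-(d-2)}\right),
\end{equation*}
the last integral being finite precisely because $d\ge 3$. The main obstacle is the uniform lower bound on $1-G^2$: one must control Hilb's remainder $\delta(\theta)$ well enough for the convergence to $\Phi$ to be uniform on $[\delta,C]$, and then glue with the Taylor regime on $[0,\delta]$ so that the resulting constant $\gamma$ is genuinely independent of $\ell$. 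The threshold dimension $d=2$ corresponds exactly to the logarithmic divergence of $\int_0^C \psi^{-1}d\psi$, which is the origin of Wigman's $\tfrac{1}{32}\log\ell$ law \eqref{varIgor}.
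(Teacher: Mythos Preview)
Your argument is correct and is essentially the standard near-diagonal estimate; the paper does not give its own proof here but simply defers to \cite[Lemma~4.4]{Wig09}, whose method is the one you describe. One small caveat: in the paper's displayed formula for $K_{\ell;d}$ the expectation should in fact be the \emph{conditional} expectation given $T_{\ell;d}(x)=T_{\ell;d}(y)=0$ (and the prefactor should be $(2\pi)^{-1}$ rather than $(2\pi)^{-d/2}$), so your Cauchy--Schwarz step needs the conditional bound $\mathbb{E}[\|\nabla T_{\ell;d}(x)\|^2 \mid T_{\ell;d}(x)=T_{\ell;d}(y)=0]\le E_{\ell;d}$; this is immediate since for centered Gaussians the conditional covariance at level zero is dominated (in the PSD order) by the unconditional one. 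With that adjustment your lower bound $1-G_{\ell;d}(\cos(\psi/L))^2\ge \gamma\psi^2$ via Taylor on $[0,\delta]$ and Hilb/compactness on $[\delta,C]$ is exactly the right mechanism, and the final integration $\int_0^C \psi^{d-2}\,d\psi<\infty$ for $d\ge 3$ gives the claimed $O(\ell^{-(d-2)})$.
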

	\emph{(In order to find the exact asymptotic law for the variance, we believe it is possible to improve Lemma \ref{lemIgor} from $O(\ell^{-(d-2)})$ to $o(\ell^{-(d-2)})$.	)} 				
					
					As for the second summand, we develop a finer analysis based on Proposition \ref{PropVarFormula}. Indeed, let us note first that
					\begin{eqnarray}
										&& \frac{1}{L}\int_C^{L\pi/2} (K_{\ell;d}(\psi/L)- c E_{\ell;d})\sin^{d-1}\psi/L\,d\psi \nonumber \cr
					 & = &\sum_{q=2}^{+\infty}	\frac{E_{\ell;d}}{d} \sum_{p,p'=0}^{q} \frac{\beta_{2q-2p}}{(2q-2p)!}\frac{\beta_{2q-2p'}}{(2q-2p')!}
	\frac{(-1)^p\sqrt{2}}{2^p}C(d,p)\frac{(-1)^{p'}\sqrt{2}}{2^{p'}}C(d,p')\nonumber \cr
	&\times& \sum_{s,s' \in \mathbb{N}^d \substack {s_1+\dots+s_d=p\nonumber \cr
	s'_1+\dots +s'_d=p'}} \frac{1}{s_1!...s_d!}\frac{1}{s'_1!...s'_d!}\times (2q-2p)! (2q-2p')! \prod_{i=1}^d (2s_i)! (2s'_i)! \nonumber \cr
		&\times& \frac{1}{L}\int_{C}^{L\pi/2} d\psi \sin^{d-1}(\psi/L)\sum_{ k_{1,2}+ k_{1,d+3}=2q-2p} \frac{G_{\ell;d}(\cos \psi/L)^{ k_{12}}}{ k_{12}!} \cr
		&\times& \frac{(-\big ( \frac{\ell(\ell+d-1)}{d} \big )^{-\frac{1}{2}} (\sin \psi/L)G'_{\ell;d}(\cos \psi/L))^{ k_{1, d+3}}}{ k_{1,d+3}!}	\nonumber \cr
	&\times& \sum_{ k_{2,1}+ k_{2,3}=2q-2p'} \frac{( \big ( \frac{\ell(\ell+d-1)}{d} \big )^{-\frac{1}{2}} \sin \psi/L G'_{\ell;d}(\cos \psi/L))^{ k_{2, 3}}}{  k_{2,3}!}\nonumber \cr
	&\times&  \sum_{i=4}^{d+2} \sum_{ k_{i,d+i}=2s_{i-2} } \frac{\Big (\big ( \frac{\ell(\ell+d-1)}{d} \big )^{-1} G'_{\ell;d}(\cos \psi/L)\Big )^{ k_{i,d+i}}}{ k_{i,d+i}!}\nonumber\cr
	&&\times \sum_{ k_{2,3}+ k_{3,d+3}=2s_1 } \frac{\Big( \big ( \frac{\ell(\ell+d-1)}{d} \big )^{-1} G_{\ell;d}'(\cos\psi/L)\cos \psi/L - G_{\ell;d}''(\cos \psi/L)\sin^2 \psi/L\Big )^{k_{3,d+3}}}{ k_{3,d+3}!}.\nonumber \\
	\label{identity}
					\end{eqnarray}
					
			Let us now rewrite \eqref{identity} as follows.	Let $0<\varepsilon<1$, adapting Corollary A.3 in \cite{Wig09} for derivatives of Gegenbauer polynomials,
			it is possible to prove that there exists $C>0$ s.t. $\forall \theta \in[C/\ell, \pi/2]$
			\begin{align*}
			&\left |G_{\ell;d}(\cos \theta)\right| < 1-\varepsilon, \\
			&\left |\left (\frac{\ell(\ell+d-1)}{d} \right )^{-\frac{1}{2}} (\sin \theta)G'_{\ell;d}(\cos \theta)\right |< 1 -\varepsilon,\\
			&\left |\left (\frac{\ell(\ell+d-1)}{d} \right )^{-1} G'_{\ell;d}(\cos \theta)\right | < 1-\varepsilon,\\
			&\left | \left (\frac{\ell(\ell+d-1)}{d} \right )^{-1} G_{\ell;d}'(\cos\psi/L)\cos \psi/L - G_{\ell;d}''(\cos \psi/L)\sin^2 \psi/L \right | < 1-\varepsilon.
									\end{align*}
			  Hence we have the following upper bound for \eqref{identity}:
\begin{eqnarray*}
&&\left | \frac{1}{L}\int_C^{L\pi/2} (K_{\ell;d}(\psi/L)- c E_{\ell;d})\sin^{d-1}\psi/L\,d\psi \right |\cr
& \le& \sum_{q=2}^{+\infty}	\frac{E_{\ell;d}}{d} \sum_{p,p'=0}^{q}\left |  \frac{\beta_{2q-2p}}{(2q-2p)!}\frac{\beta_{2q-2p'}}{(2q-2p')!}
	\frac{(-1)^p\sqrt{2}}{2^p}C(d,p)\frac{(-1)^{p'}\sqrt{2}}{2^{p'}}C(d,p')\right |\nonumber \cr
	&&\times \sum_{s,s' \in \mathbb{N}^d \substack {s_1+\dots+s_d=p\nonumber \cr
	s'_1+\dots +s'_d=p'}} \frac{1}{s_1!...s_d!}\frac{1}{s'_1!...s'_d!}\times (2q-2p)! (2q-2p')! \prod_{i=1}^d (2s_i)! (2s'_i)! \nonumber \cr
		&&\times \big ( 1-\varepsilon)^{2q-4}\sum_{i=2}^{d} \frac{1}{(2s_i)!}\sum_{k\in \mathcal A_{q,p,p'}} \frac{1}{k_{12}!} \frac{ 1}{k_{1,d+3}!} \frac{1}{  k_{2,3}!}  \frac{1}{ k_{3,d+3}!}\cr
		&&\times \frac{1}{L}\int_C^{L\pi/2} d\psi \sin^{d-1}\frac{\psi}{L}\sum_{a_1+\dots +a_{d+3}=4} \prod_{j=1}^{d-1}\Big |\Big ( \frac{\ell(\ell+d-1)}{d} \Big )^{-1} G'_{\ell;d}(\cos \psi/L)\Big |^{a_j}\cr
	&&\times \big|G_{\ell;d}(\cos \psi/L)\big|^{ a_d}\times \Big|-\Big ( \frac{\ell(\ell+d-1)}{d} \Big )^{-\frac{1}{2}} (\sin \psi/L)G'_{\ell;d}(\cos \psi/L)\Big|^{ a_{d+1}} \\
	&&\times \Big| \Big ( \frac{\ell(\ell+d-1)}{d} \Big )^{-\frac{1}{2}} \sin \psi/L G'_{\ell;d}(\cos \psi/L)\Big|^{a_{d+2}} \times  \cr
	&&\times \Big| \Big ( \frac{\ell(\ell+d-1)}{d} \Big )^{-1} G_{\ell;d}'(\cos\psi/L)\cos \psi/L - G_{\ell;d}''(\cos \psi/L)\sin^2 \psi/L\Big |^{a_{d+3}}	.		\end{eqnarray*}	

We will also need the following auxiliary result.	
	\begin{lemma}\label{lem4}
	For any $a_1,\dots, a_{d+3}\in \mathbb N$ s.t. $a_1 +\dots + a_{d+3}=4$ we have, as $\ell \to +\infty$,
	\begin{eqnarray*}
	&&\frac{1}{L}\int_C^{L\pi/2} d\psi \sin^{d-1}\frac{\psi}{L} \prod_{j=1}^{d-1}\left |\left( \frac{\ell(\ell+d-1)}{d} \right)^{-1} G'_{\ell;d}(\cos \psi/L)\right|^{a_j}\cr
	&&\times |G_{\ell;d}(\cos \psi/L)|^{ a_d}\times \left |-\left ( \frac{\ell(\ell+d-1)}{d} \right )^{-\frac{1}{2}} (\sin \psi/L)G'_{\ell;d}(\cos \psi/L)\right |^{ a_{d+1}} \cr
	&&\times \left| \left ( \frac{\ell(\ell+d-1)}{d} \right )^{-\frac{1}{2}} \sin \psi/L G'_{\ell;d}(\cos \psi/L)\right |^{a_{d+2}}  \cr
	&&\times \left| \left ( \frac{\ell(\ell+d-1)}{d} \right )^{-1} G_{\ell;d}'(\cos\psi/L)\cos \psi/L - G_{\ell;d}''(\cos \psi/L)\sin^2 \psi/L\right |^{a_{d+3}} = O\Big(\ell^{-d}\Big ).
		\end{eqnarray*}
	\end{lemma}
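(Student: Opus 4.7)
The plan is to reduce the integral to four pointwise bounds on the normalized Gegenbauer factors and then carry out an elementary integration. Specifically, I will show that each of the four types of quantities appearing in the integrand satisfies the uniform decay estimate
\[
|\cdot| = O\bigl(L^{-(d-1)/2}(\sin\theta)^{-(d-1)/2}\bigr) \qquad \text{for } \theta=\psi/L\in [C/L,\pi/2];
\]
since the exponents satisfy $a_1+\dots+a_{d+3}=4$, multiplying four such bounds yields an integrand controlled by $L^{-2(d-1)-1}(\sin(\psi/L))^{-(d-1)}\,d\psi$ (after including the volume weight $\sin^{d-1}(\psi/L)/L$), which a routine split of $[C,L\pi/2]$ will integrate to $O(\ell^{-d})$.

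For three of the four factors the pointwise bound follows from Hilb's asymptotic expansion \eqref{hilbs} together with the Bessel decay $|J_{d/2-1}(x)|,\,|J_{d/2-1}'(x)| = O(x^{-1/2})$ valid for $x\ge C$. Indeed, \eqref{hilbs} gives $|G_{\ell;d}(\cos\theta)| = O(L^{-(d-1)/2}(\sin\theta)^{-(d-1)/2})$ directly, and differentiating it in $\theta$, together with $\frac{d}{d\theta}G_{\ell;d}(\cos\theta) = -\sin\theta\, G'_{\ell;d}(\cos\theta)$, yields $|G'_{\ell;d}(\cos\theta)| = O(L^{(3-d)/2}(\sin\theta)^{-(d+1)/2})$. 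This in turn gives the same bound for $|(E_{\ell;d}/d)^{-1/2}\sin\theta\, G'_{\ell;d}(\cos\theta)|$, and an even stronger bound (smaller by a factor $(L\sin\theta)^{-1}\le 1/C$) for $|(E_{\ell;d}/d)^{-1} G'_{\ell;d}(\cos\theta)|$. The delicate factor is the combination involving $G''_{\ell;d}$: a naive termwise bound on $|G''_{\ell;d}(\cos\theta)\sin^2\theta|$ alone is far too large, of order $L^{(5-d)/2}(\sin\theta)^{-(d-1)/2}$. Here I would invoke the radial eigenfunction identity
\[
\sin^2\theta\, G''_{\ell;d}(\cos\theta) = d\cos\theta\, G'_{\ell;d}(\cos\theta) - E_{\ell;d}\, G_{\ell;d}(\cos\theta),
\]
which expresses the fact that $y\mapsto G_{\ell;d}(\cos d(x,y))$ is an eigenfunction of $\Delta_d$ with eigenvalue $-E_{\ell;d}$. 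Substituting this into the mixed $G'_{\ell;d}$--$G''_{\ell;d}$ expression causes the two $d\cos\theta\,G'_{\ell;d}$ contributions to cancel, leaving (a constant multiple of) $G_{\ell;d}(\cos\theta)$ plus a remainder of order $|G'_{\ell;d}(\cos\theta)|/E_{\ell;d}$; both terms then obey the desired bound $O(L^{-(d-1)/2}(\sin\theta)^{-(d-1)/2})$.

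With the four uniform pointwise bounds in hand, the integration is routine. On $[C,L]$ one has $\sin(\psi/L)\sim \psi/L$, whence the integrand is $O(L^{-d}\psi^{-(d-1)})$, which integrates to $O(L^{-d})$ for $d\ge 3$. On $[L,L\pi/2]$ the sine is bounded below by $\sin 1>0$, so the integrand is $O(L^{-2d+1})$ over an interval of length $O(L)$, contributing $O(L^{-2(d-1)})\subseteq O(L^{-d})$ for $d\ge 2$. Summing the two pieces gives $O(L^{-d})=O(\ell^{-d})$, as required. The main obstacle is the cancellation argument for the $G''_{\ell;d}$-term: without invoking the radial ODE one would recover only Wigman's rate $O(\ell^{-(d-5)/2})$ of \cite{Wig09}, and it is precisely this cancellation that powers the improvement to $O(\ell^{-(d-2)})$ in Theorem \ref{thmvar}.
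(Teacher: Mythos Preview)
Your approach is correct and is essentially the paper's: both rest on the Gegenbauer differential equation $\sin^2\theta\,G''_{\ell;d}(\cos\theta)=d\cos\theta\,G'_{\ell;d}(\cos\theta)-E_{\ell;d}G_{\ell;d}(\cos\theta)$ to tame the $G''$-factor, after which every normalized factor decays like $(L\sin\theta)^{-(d-1)/2}$ and the integral is elementary (the paper carries the full asymptotics from Lemma~\ref{lemPowers} and Lemma~\ref{lem:G} rather than just upper bounds, but the mechanism is identical). Two small cautions: the $G'$ contributions do not literally cancel---one has coefficient $1$ and the other $d$, so the residual is $(1-d)\cos\theta\,G'_{\ell;d}/(E_{\ell;d}/d)$, which is nonetheless $O(|G'_{\ell;d}|/E_{\ell;d})$ as you need---and obtaining the $G'$ bound by ``differentiating Hilb'' requires controlling the derivative of the remainder $\delta(\theta)$, which is why the paper instead derives \eqref{G1} via the Jacobi recurrence (4.5.1) in \cite{szego}.
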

	The proof of Lemma \ref{lem4} is collected in Section \ref{secLem4}.
	
We can now establish the following bound.					
		\begin{proposition} \label{lemNonSing}
					As $\ell\to +\infty$,
					\begin{equation*}
					\frac{1}{L}\int_C^{L\pi}( K_{\ell;d}(\psi/L) - c E_{\ell;d})\sin^{d-1}\psi/L\,d\psi = O\left ( \ell^{-(d-2)}\right ).
										\end{equation*}
					\end{proposition}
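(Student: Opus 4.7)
\noindent\textbf{Proof proposal for Proposition~\ref{lemNonSing}.} The plan is to take the explicit pointwise upper bound displayed immediately before Lemma~\ref{lem4} and invoke Lemma~\ref{lem4} term-by-term, producing a series in the chaotic parameter $q$ that I will verify to be absolutely convergent. Concretely: in the inequality just above Lemma~\ref{lem4}, exactly four of the $2q$ multiplicative factors inside each summand of \eqref{identity} are isolated (indexed by $a_1,\dots,a_{d+3}$ with $\sum_j a_j=4$) while the remaining $2q-4$ factors are absorbed into the damping factor $(1-\varepsilon)^{2q-4}$ via the pointwise estimates valid on $[C/L,\pi/2]$. Applying Lemma~\ref{lem4} bounds the retained angular integral by $O(\ell^{-d})$ uniformly in $(a_1,\dots,a_{d+3})$; multiplying by the prefactor $E_{\ell;d}/d = O(\ell^2)$ yields an individual contribution of $O(\ell^{-(d-2)})$ per summand. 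Choosing $C$ sufficiently large (equivalently, $\varepsilon$ close enough to~$1$) should then make the damping $(1-\varepsilon)^{2q-4}$ small enough to dominate the combinatorial growth in $q$; combined with the constant geometric prefactor $2\mathcal H^d(\mathbb S^d)\mathcal H^{d-1}(\mathbb S^{d-1})$, this will yield the claimed $O(\ell^{-(d-2)})$ bound.

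\noindent\textbf{The key technical step} that I would carry out next is to bound uniformly the combinatorial weight $W_q$ obtained by summing the absolute values of the coefficients in \eqref{identity} over $p,p'\in\{0,\dots,q\}$, over $s,s'\in\mathbb N^d$ with $s_1+\dots+s_d=p$ and $s_1'+\dots+s_d'=p'$, and over $k\in\mathcal A_{q,p,p'}$. Using the closed form $|\beta_{2q-2p}|/(2q-2p)! = 1/(\sqrt{2\pi}\,2^{q-p}(q-p)!)$ together with the Stirling estimate $|C(d,p)| = O(p^{-(d+1)/2})$ extracted from the right-hand identity in \eqref{Cdef}, and exploiting the fact (implicit in $\mathcal A_{q,p,p'}$) that $s_i=s_i'$ for $i=2,\dots,d$, the large factorials $(2q-2p)!(2q-2p')!\prod_i(2s_i)!(2s_i')!$ in the numerator should be largely cancelled by the denominators $\prod k_{ij}!\,s_i!\,s_i'!$ arising from the diagram and multinomial formulas. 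Since $|\mathcal A_{q,p,p'}|$ is polynomial in $q$, a standard multinomial estimate is expected to give $W_q\le K^q$ for some $K=K(d)>0$ independent of $\ell$, at which point the series $\sum_{q\ge 2}(1-\varepsilon)^{2q-4}W_q$ converges provided $(1-\varepsilon)^2 K <1$.

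\noindent\textbf{Main obstacle.} The hardest step will be establishing the exponential-in-$q$ bound $W_q \le K^q$ with constants uniform in $\ell$ and in the decomposition parameters. Morally this is just the $L^2(\mathbb P)$-convergence of the Wiener--It\^o expansion \eqref{chaostot} of the nodal volume made quantitative, but the delicate accounting is between (i) the factorials in $\alpha_{2s_1,\dots,2s_d}$ hidden through $C(d,p)$, (ii) the Hermite diagram denominators $k_{ij}!$, (iii) the $(q-p)!$ in $\beta_{2q-2p}/(2q-2p)!$, and (iv) the multinomial factorials $s_i!$ and $s_i'!$: taken individually several of these grow faster than $K^q$, but their mutual cancellation after imposing all the linear constraints defining $\mathcal A_{q,p,p'}$ should produce a bounded geometric rate. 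This bookkeeping is the only genuinely non-routine ingredient in the proof; all other steps reduce to invoking the pointwise Gegenbauer estimates and Lemma~\ref{lem4}.
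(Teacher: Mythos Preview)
Your overall strategy matches the paper's proof exactly: factor out $(1-\varepsilon)^{2q-4}$ via the uniform bounds on $[C/\ell,\pi/2]$, apply Lemma~\ref{lem4} to the remaining quartic angular integral to get $O(\ell^{-d})$, multiply by $E_{\ell;d}/d$, and then verify that the resulting series in $q$ converges absolutely for $\varepsilon$ close enough to~$1$.

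For the step you flag as the main obstacle---the exponential-in-$q$ bound on the combinatorial weight $W_q$---the paper's execution differs slightly from your proposed direct cancellation tracking. It first invokes the crude diagram bound
\[
(2q-2p)!\,(2q-2p')!\prod_{i=1}^d (2s_i)!\,(2s'_i)!\sum_{k\in\mathcal A_{q,p,p'}}\prod\frac{1}{k_{ij}!}\;\le\;(2q)!,
\]
which is just the total count of non-flat pairings on $2q$ legs. It then applies Cauchy--Schwarz in the spirit of \cite[Lemma~3.5]{DNPR19}, together with the uniform boundedness of the map $(s_1,\dots,s_d,c)\mapsto \alpha_{2s_1,\dots,2s_d}^2\beta_{2c}^2\big/[(2s_1)!\cdots(2s_d)!(2c)!]$, to reduce the remaining sum to $\sum (d+1)^{2s_1+\dots+2s_d+2c}\sqrt{1-\varepsilon}^{\,2s_1+\dots+2s_d+2c}$ via the multinomial bound $\binom{n}{m_0,\dots,m_d}\le(d+1)^n$. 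This converges once $(d+1)\sqrt{1-\varepsilon}<1$. Your proposed bookkeeping (Stirling on $C(d,p)$, explicit $\beta_{2q-2p}/(2q-2p)!$, polynomial size of $\mathcal A_{q,p,p'}$) would also succeed, but the paper's route sidesteps the delicate term-by-term accounting by absorbing everything into the single $(2q)!$ diagram bound.
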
			
	\begin{proof}[Proof of Proposition \ref{lemNonSing}]	
Recall that
$$	
\frac{(-1)^p\sqrt{2}}{2^p}C(d,p)\frac{1}{s_1!\dots s_d!} = \frac{\alpha_{2s_1,\dots,2s_d}}{(2s_1)!\dots(2s_d)!}.
$$
The following estimate holds true due to a rough counting of all possible non-flat diagrams in the diagram formula \cite[Proposition 4.15]{MP11}:
\begin{align*}
&(2q-2p)! (2q-2p')! \prod_{i=1}^d (2s_i)! (2s'_i)! 	\sum_{
	k_{1,2}+ k_{1,d+3}=2q-2p} \frac{1}{ k_{12}!} \frac{ 1}{ k_{1,d+3}!} \sum_{ k_{2,1}+ k_{2,3}=2q-2p'} \frac{1}{ k_{2,3}!} \\
&\times \sum_{  k_{2,3}+ k_{3,d+3}=2s_1 }\sum_{i=4}^{d+2} \sum_{ k_{i,d+i}=2s_{i-2} } \frac{1}{k_{i,d+i}!}\ \frac{1}{ k_{3,d+3}!} \le (2q)!.
\end{align*}
Then we can bound
\begin{eqnarray*}
 &&\sum_{q=2}^{+\infty}	\frac{E_{\ell;d}}{d} \sum_{p,p'=0}^{q} \sum_{s,s' \in \mathbb{N}^d \substack {s_1+\dots+s_d=p\nonumber \cr
		s'_1+\dots +s'_d=p'}} \frac{\beta_{2q-2p}}{(2q-2p)!}\frac{\beta_{2q-2p'}}{(2q-2p')!}
\nonumber \cr
&&\quad \times \frac{\alpha_{2s_1,\dots,2s_d}}{(2s_1)!\dots(2s_d)!} \frac{\alpha_{2s'_1,\dots,2s'_d}}{(2s'_1)!\dots(2s'_d)!} \times (2q-2p)! (2q-2p')! \prod_{i=1}^d (2s_i)! (2s'_i)! \nonumber \cr
&&\quad \times \big ( 1-\varepsilon)^{2q-4}\sum_{ k_{1,2}+ k_{1,d+3}=2q-2p} \frac{1}{ k_{12}!} \frac{ 1}{ k_{1,d+3}!} \cr
&&\quad \times\sum_{ k_{2,1}+k_{2,3}=2q-2p'} \frac{1}{ k_{2,3}!} \Big [ \sum_{  k_{2,3}+k_{3,d+3}=2s_1 }\sum_{i=4}^{d+2} \sum_{ k_{i,d+i}=2s_{i-2} } \frac{1}{k_{i,d+i}!}\ \frac{1}{ k_{3,d+3}!}\Big ]\cr
&\leq& \frac{1}{(1-\varepsilon)^4}\sum_{q=2}^{+\infty}	\frac{E_{\ell;d}}{d} (1-\varepsilon)^{2q} (2q)!\cr
 &&\times
 \sum_{p,p'=0}^{q} \sum_{s,s' \in \mathbb{N}^d \substack {s_1+\dots+s_d=p\nonumber \cr
		s'_1+\dots +s'_d=p'}} \frac{\beta_{2q-2p}}{(2q-2p)!}\frac{\beta_{2q-2p'}}{(2q-2p')!}
 \frac{\alpha_{2s_1,\dots,2s_d}}{(2s_1)!\dots(2s_d)!} \frac{\alpha_{2s'_1,\dots,2s'_d}}{(2s'_1)!\dots(2s'_d)!}. \nonumber
	\end{eqnarray*}

We have that

\begin{eqnarray*}
&&\sum_{q=2}^{+\infty}	\frac{E_{\ell;d}}{d} (1-\varepsilon)^{2q} (2q)!\cr
&&\quad \times
\sum_{p,p'=0}^{q} \sum_{s,s' \in \mathbb{N}^d \substack {s_1+\dots+s_d=p\nonumber \cr
		s'_1+\dots +s'_d=p'}} \Big|\frac{\beta_{2q-2p}}{(2q-2p)!}\frac{\beta_{2q-2p'}}{(2q-2p')!}
\frac{\alpha_{2s_1,\dots,2s_d}}{(2s_1)!\dots(2s_d)!} \frac{\alpha_{2s'_1,\dots,2s'_d}}{(2s'_1)!\dots(2s'_d)!} \Big| \nonumber \cr
&\leq& \sum_{q=2}^{+\infty}	\frac{E_{\ell;d}}{d} (1-\varepsilon)^{2q} (2q)!\cr
&&\times
\sum_{s,s' \in \mathbb{N}^d \substack {2s_1+\dots+2s_d+2c=2q\nonumber \cr
		2s'_1+\dots +2s'_d+2c'=2q}} \Big|\frac{\beta_{2c}}{(2c)!}\frac{\alpha_{2s_1,\dots,2s_d}}{(2s_1)!\dots(2s_d)!}\Big|\Big
|	\frac{\beta_{2c'}}{(2c')!}
 \frac{\alpha_{2s'_1,\dots,2s'_d}}{(2s'_1)!\dots(2s'_d)!} \Big |\nonumber \cr
 &\leq &
\frac{E_{\ell;d}}{d} \sum_{\substack{2s_1,\dots,2s_d,2s'_1,\dots,2s'_d,2c,2c'\geq 0 \\ 2s_1+\dots+2s_d+2c \geq 2 \\ 2s'_1+\dots+2s'_d+2c' \geq 2 }} \Big
 |\frac{\beta_{2c}}{(2c)!}\frac{\alpha_{2s_1,\dots,2s_d}}{(2s_1)!\dots(2s_d)!}\Big|
 \Big|	\frac{\beta_{2c'}}{(2c')!}
 \frac{\alpha_{2s'_1,\dots,2s'_d}}{(2s'_1)!\dots(2s'_d)!} \Big|
 \cr
 &&\times \sqrt{(1-\varepsilon)}^{2s_1+\dots+2s_d+2c+2s'_1+\dots+2s'_d+2c'} \sqrt{(2s_1+\dots+2s_d+2c)!} \sqrt{(2s'_1+\dots+2s'_d+2c')!} \nonumber \cr
 &\leq& \frac{E_{\ell;d}}{d} \sum_{\substack{2s_1,\dots,2s_d,2s'_1,\dots,2s'_d,2c,2c'\geq 0 \\ 2s_1+\dots+2s_d+2c \geq 2 \\ 2s'_1+\dots+2s'_d+2c' \geq 2 }}
\Big |\frac{\beta_{2c}}{(2c)!}\frac{\alpha_{2s_1,\dots,2s_d}}{(2s_1)!\dots(2s_d)!} \Big|^2 (2s_1+\dots+2s_d+2c)!
 \cr
 &&\times \sqrt{(1-\varepsilon)}^{2s_1+\dots+2s_d+2c+2s'_1+\dots+2s'_d+2c'}
\end{eqnarray*}
due to Cauchy-Schwarz applied as in the proof of Lemma 3.5 \cite{DNPR19}.

Since the map
$(2s_1,\dots,2s_d,2c) \to \frac{\alpha^2_{2s_1,\dots,2s_d}\beta_{2c}^2}{(2s_1)!\dots(2s_d)!(2c)!}$ is bounded uniformly over $\varepsilon$, we have
\begin{eqnarray*}
&\leq& \frac{E_{\ell;d}}{d} C  \sum_{\substack{2s_1,\dots,2s_d,2s'_1,\dots,2s'_d,2c,2c'\geq 0 \\ 2s_1+\dots+2s_d+2c \geq 2 \\ 2s'_1+\dots+2s'_d+2c' \geq 2 }}
\frac{(2s_1+\dots+2s_d+2c)!}{(2s_1)!\dots(2s_d)!(2c)!}
\sqrt{(1-\varepsilon)}^{2s_1+\dots+2s_d+2c+2s'_1+\dots+2s'_d+2c'}.
\end{eqnarray*}

Moreover, estimating $\frac{(2s_1+\dots+2s_d+c)!}{(2s_1)!\dots(2s_d)!(2c)!} \leq (d+1)^{2s_1+\dots+2s_d+2c}$ and choosing $\varepsilon$ such that $(d+1) \sqrt{1-\eps}<1$,
we conclude that
\begin{eqnarray*}
&\leq& \frac{E_{\ell;d}}{d} C  \sum_{\substack{2s_1,\dots,2s_d,2s'_1,\dots,2s'_d,2c,2c'\geq 0 \\ 2s_1+\dots+2s_d+2c \geq 2 \\ 2s'_1+\dots+2s'_d+2c' \geq 2 }}
(d+1)^{2s_1+\dots+2s_d+2c}
\sqrt{(1-\varepsilon)}^{2s_1+\dots+2s_d+2c+2s'_1+\dots+2s'_d+2c'} <\infty.
\end{eqnarray*}

Finally, from Lemma \ref{lem4} we have
\begin{eqnarray*}
&&\frac{1}{L}\int_C^{L\pi/2} d\psi \sin^{d-1}\frac{\psi}{L}\sum_{a_1+\dots a_{d+3}=4} \prod_{j=1}^{d-1}\Big |\left ( \frac{\ell(\ell+d-1)}{d} \right )^{-1} G'_{\ell;d}(\cos \psi/L)\Big |^{a_j}\cr
	&&\quad \times \left|G_{\ell;d}(\cos \psi/L)\right|^{ a_d}\times \Big|-\left ( \frac{\ell(\ell+d-1)}{d} \right )^{-\frac{1}{2}} (\sin \psi/L)G'_{\ell;d}(\cos \psi/L) \Big|^{ a_{d+1}} \cr
	&&\quad \times \Big| \left( \frac{\ell(\ell+d-1)}{d} \right )^{-\frac{1}{2}} \sin \psi/L G'_{\ell;d}(\cos \psi/L) \Big|^{a_{d+2}} \times  \cr
	&&\quad\times \Big| \left ( \frac{\ell(\ell+d-1)}{d} \right )^{-1} G_{\ell;d}'(\cos\psi/L)\cos \psi/L - G_{\ell;d}''(\cos \psi/L)\sin^2 \psi/L\Big |^{a_{d+3}} = O\left(\frac{1}{\ell^d} \right)			
\end{eqnarray*}
thus concluding the proof.
\end{proof}

\section{Proof of Lemma \ref{lem4}}\label{secLem4}

To prove Lemma \ref{lem4}   we first need the following asymptotics for powers of Gegenbauer polynomials, whose proof follows applying the expansion of Gegenbauer polynomials and its derivatives given in Lemma \ref{lem:G} .

		Let us set for notational simplicity,
		\begin{equation}\label{Xi}
		\Xi_{\ell;d}(\psi) :=  \frac{2^{d/2-1}}{{\ell + d/2-1 \choose \ell}} \frac{\Gamma(\ell+d/2)}{L^{d/2-1}\ell!} \frac{1}{(\sin\psi/\ell)^{d/2-1}}.
				\end{equation}
	\begin{lemma}\label{lemPowers} As $\ell\to +\infty$, uniformly for $\psi > C$, for $k>0, k \in \mathbb{ N}$,
	\begin{align*}
	(G_{\ell;d}(\cos(\psi/L)))^{k} 
   =&  \Big [\Xi_{\ell;d}(\psi) \sqrt{\frac{2}{\pi \ell \sin \psi /L}} \Big ( \sin\big(\psi - (\frac{d}{2}-1)\frac{\pi}{2} +\frac{\pi}{4}\big)\Big )\Big ]^{ k} \\
   &+  O\left(\frac{1}{\psi^{2+(d-1){k}/2}}+\frac{1}{\ell \psi^{(d-1)k/2}}\right)
     		\end{align*}
		Moreover, 	
		\begin{align*}
&\Big (-\Big( \frac{\ell(\ell+d-1)}{d} \Big )^{-\frac{1}{2}} \sin \frac{\psi}{L} G'_{\ell;d}(\cos 	\psi/L) \Big )^{ k} \\
&= \Big [\Big( \frac{\ell(\ell+d-1)}{d} \Big )^{-\frac{1}{2}} \frac{\sqrt{\ell}}{\sin^{1/2} \psi/L} \Xi_{\ell;d}(\psi) \sqrt{\frac{2}{\pi}}
\cos\big(\psi - (\frac{d}{2}-1)\frac{\pi}{2} +\frac{\pi}{4}\big)\Big]^{ k} \\
&\quad  + O\left(\frac{\ell^{3{k}-2}}{\psi^{(3/2+d/2){k}-1}}+\frac{\ell^{3{k}-1}}{\psi^{(3/2+d/2){k}+1}}\right),
\end{align*}
and 
\begin{align*}
 &\Big (\Big ( \frac{\ell(\ell+d-1)}{d} \Big)^{-1} G'_{\ell;d}(\cos \frac{\psi}{L})\Big )^{k} \\
 &= \Big [-\Big( \frac{\ell(\ell+d-1)}{d} \Big )^{-1} \frac{\sqrt{\ell}}{\sin^{3/2} \psi/L}\Xi_{\ell;d}(\psi) \sqrt{\frac{2}{\pi}}
\cos\big(\psi - (\frac{d}{2}-1)\frac{\pi}{2} +\frac{\pi}{4}\big)  \Big ]^{k} \\
& \quad
+ O\left(\frac{\ell^{3k-2}}{\psi^{(5/2+d/2)k-1}}+\frac{\ell^{3k-1}}{\psi^{(5/2+d/2)k+1}}\right).
 \end{align*}
Finally 
\begin{align*}
&\Big( \Big( \frac{\ell(\ell+d-1)}{d} \Big)^{-1} \Big (G_{\ell;d}'(\cos\frac{\psi}{L})\cos \frac{\psi}{L} - G_{\ell;d}''(\cos \frac{\psi}{L})\sin^2 \frac{\psi}{L}\Big)\Big )^{k}\\
& = \left ( \frac{\ell(\ell+d-1)}{d} \right )^{-k}\Big ( \ell^2\Big(1+\frac{d-1}{\ell}\Big)G_{\ell;d}(\cos\psi/L)\Big )^{k} +  O\left(\frac{1}{\psi^{2+(d-1)k/2}}+\frac{1}{\ell\psi^{(d-1)k/2}}\right).
\end{align*}
\end{lemma}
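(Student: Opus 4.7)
The plan is a two-step reduction: obtain a leading oscillatory representation of each base quantity plus a controlled remainder, then raise to the $k$-th power via the binomial expansion. For \emph{Step 1}, insert Hilb's formula \eqref{hilbs} into $G_{\ell;d}(\cos\psi/L)$ and replace $J_{d/2-1}(\psi)$ by its large-argument asymptotic $J_\nu(x)=\sqrt{2/(\pi x)}\cos(x-\nu\pi/2-\pi/4)+O(x^{-3/2})$, valid since $\psi>C$. Isolating the Gegenbauer value by dividing by $(\sin\psi/L)^{d/2-1}$ and grouping constants into $\Xi_{\ell;d}(\psi)$ produces the claimed main term $M_\ell(\psi)$, plus a combined remainder $R_\ell(\psi)=O(\psi^{-2-(d-1)/2}+\ell^{-1}\psi^{-(d-1)/2})$ arising from the $O(\psi^{-3/2})$ Bessel correction (amplified by the $(\sin\psi/L)^{-(d/2-1)}$ factor) and from Hilb's own remainder $\delta(\psi/L)\ll(\psi/L)^{1/2}\ell^{-3/2}$. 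For the three derivative quantities, invoke Lemma~\ref{lem:G}, whose Hilb-type expansions differ from the previous one only by additional factors of $\sqrt{\ell}/\sin^{1/2}(\psi/L)$ or $\sqrt{\ell}/\sin^{3/2}(\psi/L)$ and by $\pi/2$ phase shifts (sine replaced by cosine), reflecting that each differentiation in $t=\cos\theta$ contributes a factor of order $L$ via the Bessel recurrence $J'_\nu(x)=J_{\nu-1}(x)-\nu J_\nu(x)/x$.

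For \emph{Step 2}, write each base quantity as $M+R$ and use the identity $(M+R)^k=M^k+\sum_{j=1}^k\binom{k}{j}M^{k-j}R^j$. The dominant correction is the $j=1$ term $kM^{k-1}R$, and the $j\ge 2$ contributions are subsumed into it because the ratio $R/M=O(\psi^{-2}+\ell^{-1})$ is $o(1)$ uniformly in $\psi>C$. Substituting the orders of $M$ and $R$ from Step 1 (the derivative quantities carry additional powers of $\ell$ and of $\sin^{-1}(\psi/L)$) yields exactly the error rates $O(\psi^{-2-(d-1)k/2}+\ell^{-1}\psi^{-(d-1)k/2})$ for the Gegenbauer case and $O(\ell^{3k-2}\psi^{-(3/2+d/2)k+1}+\ell^{3k-1}\psi^{-(3/2+d/2)k-1})$ (respectively with the factor $(5/2+d/2)k$) for the two derivative cases.

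For the last expansion, invoke the Jacobi ODE $(1-t^2)G''_{\ell;d}(t)-d\,t\,G'_{\ell;d}(t)+\ell(\ell+d-1)G_{\ell;d}(t)=0$ satisfied by $G_{\ell;d}=\alpha_\ell^{-1}P_\ell^{(d/2-1,d/2-1)}$. Setting $t=\cos\psi/L$ this rearranges to
$$G'_{\ell;d}(\cos\tfrac{\psi}{L})\cos\tfrac{\psi}{L}-G''_{\ell;d}(\cos\tfrac{\psi}{L})\sin^2\tfrac{\psi}{L}=\ell(\ell+d-1)G_{\ell;d}(\cos\tfrac{\psi}{L})-(d-1)\cos\tfrac{\psi}{L}\,G'_{\ell;d}(\cos\tfrac{\psi}{L}).$$
The first summand on the right equals $\ell^2(1+(d-1)/\ell)G_{\ell;d}$ (the stated leading factor), while the second is of strictly smaller order in $\ell$ once multiplied by the prefactor $(\ell(\ell+d-1)/d)^{-1}$ and is therefore absorbed into the error. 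Raising to the $k$-th power via Step 2 and substituting the Step 1 expansion for $G_{\ell;d}^k$ concludes.

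The principal difficulty is the error bookkeeping in Step 2: one must carefully verify that the higher-order $j\ge 2$ binomial terms truly dissolve into the stated error for every $k$ and, for the derivative-based quantities, that the precise exponents of $\ell$ and $\psi$ match. This rests on the uniform smallness of $R/M$ together with the precise asymptotic furnished by Lemma~\ref{lem:G}. A secondary point is matching the trigonometric identities (sine versus cosine with the indicated phase shifts) between Hilb's formula and the derivative expansions, which is settled by standard Bessel differentiation rules.
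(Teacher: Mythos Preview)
Your proposal is correct and follows essentially the same route as the paper, which merely states that the lemma ``follows applying the expansion of Gegenbauer polynomials and its derivatives given in Lemma~\ref{lem:G}''; you have simply spelled out the natural mechanics (Hilb plus Hankel for the base quantity, the Jacobi ODE for the last identity, and a binomial expansion to pass to the $k$-th power). One small caution: phrasing the control of the $j\ge 2$ terms via the ratio $R/M$ is imprecise because the oscillatory main term $M$ vanishes along a sequence of $\psi$'s, so you should instead bound each $\binom{k}{j}M^{k-j}R^{j}$ directly using the uniform size estimates $|M|=O(\psi^{-(d-1)/2})$ and $|R|=O(\psi^{-1-(d-1)/2}+\ell^{-1}\psi^{-(d-1)/2})$, which yields the same conclusion.
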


\begin{proof}[Proof of Lemma \ref{lem4}] Let us consider the integral
  \begin{eqnarray*}
  	I:&=&\frac{1}{L}\int_C^{L\pi/2} d\psi \sin^{d-1}\frac{\psi}{L} \prod_{j=1}^{d-1}\left |\left( \frac{\ell(\ell+d-1)}{d} \right)^{-1} G'_{\ell;d}(\cos \psi/L)\right|^{a_j}\cr
  	&&\times |G_{\ell;d}(\cos \psi/L)|^{ a_d}\times \left |-\left ( \frac{\ell(\ell+d-1)}{d} \right )^{-\frac{1}{2}} (\sin \psi/L)G'_{\ell;d}(\cos \psi/L)\right |^{ a_{d+1}} \cr
  	&&\times \left| \left ( \frac{\ell(\ell+d-1)}{d} \right )^{-\frac{1}{2}} (\sin \psi/L) G'_{\ell;d}(\cos \psi/L)\right |^{a_{d+2}}  \cr
  	&&\times \left| \left ( \frac{\ell(\ell+d-1)}{d} \right )^{-1} G_{\ell;d}'(\cos\psi/L)\cos \psi/L - G_{\ell;d}''(\cos \psi/L)\sin^2 \psi/L\right |^{a_{d+3}}, 
  \end{eqnarray*}

  for any $a_1,\dots, a_{d+3}\in \mathbb N$ s.t. $a_1 +\dots + a_{d+3}= 4$.

  Exploiting the asymptotics in Lemma \ref{lemPowers}, it follows that
  \begin{align*}
  I&= \left ( \frac{2^{d/2-1}}{(d/2-1)!}\right )^{4} \left( \frac{2}{\pi}\right )^{2} (-1)^{a_{d+1}}d^{\frac{a_{d+1}+a_{d+2}}{2} + a_{d+3}+\sum_{j=1}^{d-1} a_j}\\
  &\quad \times  \frac{1}{L^d}\int_{0}^{L\pi/2} d\psi \frac{1}{\psi^{d-1+\sum_{j=1}^{d-1} a_j}} \sin^{a_{d}+a_{d+3}}\big(\psi - (\frac{d}{2}-1)\frac{\pi}{2} +\frac{\pi}{4}\big)\\
  &\quad \times	\cos^{a_{d+1}+a_{d+2} +\sum_{j=1}^{d-1} a_j}\big(\psi - (\frac{d}{2}-1)\frac{\pi}{2} +\frac{\pi}{4}\big)+ o\left(\frac{1}{\ell^d}\right).
  \end{align*}

  Note that the following integral
  \begin{equation*}
\int_{0}^{\infty} d\psi \frac{1}{\psi^{d-1+\sum_{j=1}^{d-1} a_j}} \sin^{a_{d}+a_{d+3}}\big(\psi - (\frac{d}{2}-1)\frac{\pi}{2} +\frac{\pi}{4}\big)\cos^{a_{d+1}+a_{d+2} +\sum_{j=1}^{d-1} a_j}\big(\psi - (\frac{d}{2}-1)\frac{\pi}{2} +\frac{\pi}{4}\big)
  \end{equation*}
  is absolutely convergent for every $d\geq3$, hence the order of magnitude of $I$ is $\frac{1}{L^d}$, thus concluding the proof.



\end{proof}

\appendix

\section{Asymptotics for Gegenbauer polynomials}\label{AppendixA}

This section generalizes to higher dimensions some results for $d=2$ given by \cite[Lemma B.3]{W10}; in the sequel, recall definition (\ref{Xi}).

	\begin{lemma}\label{lem:G}
	For $\ell\ge 1$ and $\psi > C$ we have
	\begin{eqnarray}
	G_{\ell;d}(\cos\frac{\psi}{L}) &=& \Xi_{\ell;d}(\psi)\times \Big\{ \sqrt{\frac{2}{\pi \ell \sin \psi /L}} \Big [ \sin\big(\psi - (\frac{d}{2}-1)\frac{\pi}{2} +\frac{\pi}{4}\big) \nonumber \cr
  &&+ \frac{(4(d/2-1)^2 - 1) }{8\psi} \cos\big(\psi - (\frac{d}{2}-1)\frac{\pi}{2} +\frac{\pi}{4}\big)\Big ]
   + O\left(\frac{1}{\psi^{5/2}}\right) \nonumber \cr&&+ O\left(\frac{1}{\ell \sqrt \psi}\right)    \Big \};\nonumber \cr
  \label{G0} \cr
			G'_{\ell;d}(\cos \frac{	\psi}{L})
			&=& \frac{\sqrt{\ell}}{\sin^{5/2} \psi/L} \Xi_{\ell;d}(\psi) \sqrt{\frac{2}{\pi}}\bigg[
			-
			\cos\big(\psi - (\frac{d}{2}-1)\frac{\pi}{2} +\frac{\pi}{4}\big)\sin(\frac{\psi}{\ell})
		\nonumber 	\cr&&+ \frac{d^2-1}{8l}\sin\big(\psi - (\frac{d}{2}-1)\frac{\pi}{2} +\frac{\pi}{4}\big) \bigg]\nonumber
			+O\left( \frac{\ell}{\psi^{d/2+1/2}}\right) \cr&&
			+O\left(\frac{\ell^{2}}{\psi^{d/2+5/2}}\right);  \label{G1}\\
			G_{\ell;d}''(\cos \frac{\psi}{L})&=&\frac{d}{\sin^2(\psi/L)} G_{\ell;d}'(\cos\frac{\psi}{L})-\frac{\ell^2}{\sin^2(\psi/L)}\left(1+\frac{d-1}{\ell}\right)G_{\ell;d}(\cos\frac{\psi}{L}) \nonumber \cr
			&&
			+O\Big(\frac{\ell^3}{\psi^{d/2+3/2}}\Big) .\label{G2}
		\end{eqnarray}
		
	\end{lemma}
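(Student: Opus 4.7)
The plan is to derive \eqref{G0} from Hilb's formula \eqref{hilbs} combined with the classical large-argument asymptotic of the Bessel function, then to obtain \eqref{G1} by differentiating \eqref{G0} via the chain rule, and finally \eqref{G2} from the Jacobi differential equation satisfied by $G_{\ell;d}$.

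For \eqref{G0}, set $\theta=\psi/L$ in \eqref{hilbs} and divide both sides by $(\sin(\psi/L))^{d/2-1}$, so that $G_{\ell;d}(\cos(\psi/L)) = \Xi_{\ell;d}(\psi)\sqrt{(\psi/L)/\sin(\psi/L)}\,J_{d/2-1}(\psi) + \delta(\psi/L)/(\sin(\psi/L))^{d/2-1}$. Then substitute the two-term expansion
$$J_\nu(z) = \sqrt{2/(\pi z)}\Big[\cos(z-\nu\pi/2-\pi/4) - \tfrac{4\nu^2-1}{8z}\sin(z-\nu\pi/2-\pi/4) + O(z^{-2})\Big]$$
at $\nu=d/2-1$, $z=\psi$. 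The Hilb factor $\sqrt{(\psi/L)/\sin(\psi/L)}$ combines with $\sqrt{2/(\pi\psi)}$ from Bessel to give exactly $\sqrt{2/(\pi L\sin(\psi/L))}$; swapping $L$ for $\ell$ introduces a multiplicative factor $1+O(1/\ell)$, whose absolute contribution is of order $O(1/(\ell\sqrt\psi))$. The trigonometric identities $\cos(x-\pi/4)=\sin(x+\pi/4)$ and $\sin(x-\pi/4)=-\cos(x+\pi/4)$ then convert the cosine form to the sine/cosine form of \eqref{G0}. The Bessel tail $O(z^{-2})\sqrt{2/(\pi z)}=O(z^{-5/2})$ yields the $O(1/\psi^{5/2})$ error inside the bracket, while the Hilb remainder $\delta(\psi/L)\ll\sqrt{\psi/L}/\ell^{3/2}$ divided by $(\sin(\psi/L))^{d/2-1}\Xi_{\ell;d}(\psi)$ is dominated by the $O(1/(\ell\sqrt\psi))$ term.

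For \eqref{G1}, I would differentiate both sides of \eqref{G0} in $\psi$ and invert the chain rule $\tfrac{d}{d\psi}G_{\ell;d}(\cos(\psi/L)) = -L^{-1}\sin(\psi/L)\,G'_{\ell;d}(\cos(\psi/L))$ to obtain $G'_{\ell;d}(\cos(\psi/L)) = -L(\sin(\psi/L))^{-1}\tfrac{d}{d\psi}G_{\ell;d}(\cos(\psi/L))$. Differentiating the leading oscillator $\sin(\psi-(d/2-1)\pi/2+\pi/4)$ produces the $\cos$-term appearing in \eqref{G1}; the $\sin$-term of coefficient $(d^2-1)/(8\ell)$ arises from differentiating the sub-leading Bessel correction $(4(d/2-1)^2-1)/(8\psi)\cos(\psi-(d/2-1)\pi/2+\pi/4)$ together with derivatives of the slowly varying prefactor $\Xi_{\ell;d}(\psi)(\sin(\psi/L))^{-1/2}$. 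Replacing $\sin(\psi/L)$ by $\sin(\psi/\ell)$ in the main term costs $O(\psi/\ell^2)$, which is absorbed in the announced error. The two error terms $O(\ell/\psi^{d/2+1/2})$ and $O(\ell^2/\psi^{d/2+5/2})$ arise from differentiating the $O(1/\psi^{5/2})$ and $O(1/(\ell\sqrt\psi))$ remainders of \eqref{G0} and multiplying by the chain-rule prefactor $L/\sin(\psi/L)\sim L^2/\psi$.

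Finally, \eqref{G2} follows from the Jacobi ODE $(1-x^2)G''_{\ell;d}(x) - dx\,G'_{\ell;d}(x) + \ell(\ell+d-1)G_{\ell;d}(x)=0$ satisfied by $P_\ell^{(d/2-1,d/2-1)}$, and hence by $G_{\ell;d}$; setting $x=\cos(\psi/L)$ so that $1-x^2=\sin^2(\psi/L)$ yields an exact identity for $G''_{\ell;d}(\cos(\psi/L))$. The only discrepancy with \eqref{G2} is the coefficient $d\cos(\psi/L)$ versus $d$ on the $G'_{\ell;d}$-term; since $d(\cos(\psi/L)-1)/\sin^2(\psi/L)=O(1)$ and $|G'_{\ell;d}(\cos(\psi/L))|=O(\ell^2/\psi^{d/2+1/2})$ by \eqref{G1}, the correction is $O(\ell^2/\psi^{d/2+1/2})=O(\ell^3/\psi^{d/2+3/2})$, the last equality using $\ell/\psi\ge 2/\pi$ on $[C,L\pi/2]$. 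The principal bookkeeping obstacle is the differentiation step underlying \eqref{G1}: sub-dominant contributions from the oscillator and from the algebraic prefactor $\Xi_{\ell;d}(\psi)(\sin(\psi/L))^{-1/2}$ must be carefully tracked so that the various $O(1/\ell)$ and $O(1/\psi)$ corrections collapse exactly into the two announced error terms.
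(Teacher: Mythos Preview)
Your treatment of \eqref{G0} and \eqref{G2} matches the paper's: Hilb's formula plus the Hankel expansion for $J_{d/2-1}$ gives \eqref{G0}, and the Jacobi/Gegenbauer ODE $(1-x^2)G''_{\ell;d}-dxG'_{\ell;d}+\ell(\ell+d-1)G_{\ell;d}=0$ yields \eqref{G2} after replacing $\cos(\psi/L)$ by $1$ at the cost of an admissible error.

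The genuine gap is in your derivation of \eqref{G1}. You propose to differentiate \eqref{G0} in $\psi$ and then say that the error terms $O(\psi^{-5/2})$ and $O(\ell^{-1}\psi^{-1/2})$ become, after applying the chain-rule prefactor $L/\sin(\psi/L)$, the stated errors $O(\ell/\psi^{d/2+1/2})$ and $O(\ell^2/\psi^{d/2+5/2})$. But an estimate $f(\psi)=O(h(\psi))$ uniformly in $\ell$ does \emph{not} entail any bound on $f'(\psi)$: the Hilb remainder $\delta(\theta)$ and the Bessel tail are only known to satisfy size bounds, not derivative bounds, at the level of \eqref{hilbs} and the Hankel expansion as quoted. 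Without an independent argument that these remainders are differentiable with derivatives of the same order (which is not automatic and not supplied), the differentiation step is unjustified. Your closing caveat about ``bookkeeping'' concerns only the explicit oscillatory and algebraic pieces, not the implicit $O$-terms, which is exactly where the argument breaks.

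The paper sidesteps this issue entirely: instead of differentiating the asymptotic, it uses the three-term relation for Jacobi polynomials (Szeg\H{o}, (4.5.1)),
\[
(2\ell+d-2)(1-x^2)\frac{d}{dx}P_\ell^{(d/2-1,d/2-1)}(x)=-\ell(2\ell+d-2)\,x\,P_\ell^{(d/2-1,d/2-1)}(x)+2(\ell+d/2-1)^2P_{\ell-1}^{(d/2-1,d/2-1)}(x),
\]
which, after normalizing, expresses $G'_{\ell;d}(\cos\psi/L)$ algebraically in terms of $G_{\ell;d}(\cos\psi/L)$ and $G_{\ell-1;d}(\cos\psi/L)$. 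One then inserts \eqref{G0} for both (with $\ell$ and $\ell-1$), and the main term of \eqref{G1} emerges from the difference $G_{\ell-1;d}-\cos(\psi/L)G_{\ell;d}$ via the addition formula for sine; no differentiation of any remainder is ever needed. If you want to rescue your route, you would have to go back to differentiable versions of Hilb's and Hankel's expansions with explicit control on the derivatives of the remainders; the recurrence approach is both shorter and cleaner.
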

	\begin{proof} Let us start with (\ref{G0}).
		From Hilb's asymptotic formula, recalled in (\ref{hilbs}), we can write
	\begin{align*}
(\sin \frac{\psi}{\ell})^{d/2-1} G_{\ell;d}(\cos\frac{\psi}{L}) &= \frac{2^{d/2-1}}{{\ell + d/2-1 \choose \ell}} \Big ( \frac{\Gamma(\ell+d/2)}{L^{d/2-1}\ell!} \sqrt{\frac{\psi/L}{\sin \psi/L}} J_{d/2-1}(\psi) + O\left(\frac{\sqrt \psi}{\ell^{2}}\right)       \Big )
	\end{align*}
which leads to
	\begin{align}\label{G}
 G_{\ell;d}(\cos\frac{\psi}{L}) = 	\Xi_{\ell;d}(\psi) \Big ( \sqrt{\frac{\psi/L}{\sin \psi/L}} J_{d/2-1}(\psi) + O\left(\frac{\sqrt \psi}{\ell^{2}}\right)       \Big ).
    	\end{align}
    	
We now exploit the Hankel's expansion for Bessel functions \cite[pp. 237-242]{Olv97}:
    	\[J_{\nu}\left(z\right)\sim\left(\frac{2}{\pi z}\right)^{\frac{1}{2}}\*\left(%
    	\cos\omega\sum_{k=0}^{\infty}(-1)^{k}\frac{a_{2k}(\nu)}{z^{2k}}-\sin\omega\sum%
    	_{k=0}^{\infty}(-1)^{k}\frac{a_{2k+1}(\nu)}{z^{2k+1}}\right),\]			
    	where, for $k\ge 1$,
    	\[a_{k}(\nu)=\frac{(4\nu^{2}-1^{2})(4\nu^{2}-3^{2})\cdots(4\nu^{2}-(2k-1)^{2})}{%
    		k!8^{k}}=\frac{{\left(\frac{1}{2}-\nu\right)_{k}}{\left(\frac{1}{2}+\nu\right)%
    			_{k}}}{(-2)^{k}k!},\]
    	with the convention $a_0(\nu)=1$. In particular, in our case $\nu=d/2-1$ and then
    	\begin{equation*}
    	a_0(d/2-1) = 1, \qquad a_1(d/2-1) = \frac{(4(d/2-1)^2 - 1) }{8}.
    		\end{equation*}
Thus we find
	\begin{align*}
	J_{d/2-1}(\psi)
	=&  \sqrt{\frac{2}{\pi \psi}}  \sin\big(\psi - (\frac{d}{2}-1)\frac{\pi}{2} +\frac{\pi}{4}\big)  
	\\
	&+\sqrt{\frac{2}{\pi \psi}}  \cos\big(\psi - (\frac{d}{2}-1)\frac{\pi}{2} +\frac{\pi}{4}\big) \frac{(4(d/2-1)^2 - 1) }{8} \frac{1}{\psi} 	+ O\left(\frac{1}{\psi^{5/2}}\right).
				\end{align*}

Substituting this expansion in (\ref{G}) we get

\begin{eqnarray*}
 G_{\ell;d}(\cos\frac{\psi}{L}) &=& 	\Xi_{\ell;d}(\psi)
\Big \{ \sqrt{\frac{\psi/L}{\sin \psi/L}}\Big [ \sqrt{\frac{2}{\pi \psi}}  \sin\big(\psi - (\frac{d}{2}-1)\frac{\pi}{2} +\frac{\pi}{4}\big) 
	\\
	&&+\sqrt{\frac{2}{\pi \psi}}  \cos\big(\psi - (\frac{d}{2}-1)\frac{\pi}{2} +\frac{\pi}{4}\big) \frac{(4(d/2-1)^2 - 1) }{8} \frac{1}{\psi} 	+ O\left(\frac{1}{\psi^{5/2}}\right) \Big ] \cr
  && + O\left(\frac{\sqrt \psi}{\ell^{2}}\right)       \Big\}  \cr
  &=& \Xi_{\ell;d}(\psi)
\times \Big \{\sqrt{\frac{2}{\pi \ell \sin \psi /L}} \Big [ \sin\big(\psi - (\frac{d}{2}-1)\frac{\pi}{2} +\frac{\pi}{4}\big)\cr
  &&+ \frac{(4(d/2-1)^2 - 1) }{8\psi} \cos\big(\psi - (\frac{d}{2}-1)\frac{\pi}{2} +\frac{\pi}{4}\big)\Big ]  + O\left(\frac{1}{\psi^{5/2}}\right) + O\left(\frac{1}{\ell \sqrt \psi}\right)    \Big \},
      	\end{eqnarray*}
where we used that $
	O(\sqrt \psi/\ell^2)	=  O(1/\ell \sqrt \psi)
$
	since $\psi /\ell$ is bounded, being equal to $\theta$. \\

Let us now  prove (\ref{G1}).
Recall the definition of Jacobi polynomials $P_\ell^{(\alpha,\beta)}$, given in Section \ref{subsec:sphericalharmonics}. From (4.5.1) in \cite{szego}, it is known that
\begin{align*}
(2\ell + \alpha +\beta) (1-x^2) &\frac{d}{dx}P^{(\alpha, \beta)}_\ell(x) \\ 
&= -\ell[ (2\ell+\alpha +\beta)x + \beta -\alpha ]P_\ell^{(\alpha, \beta)}(x) + 2(\ell+\alpha)(\ell+\beta) P_{\ell-1}^{(\alpha,\beta)}(x),
\end{align*}
with $P_\ell^{(\alpha, \beta)}(1) = {\ell +\alpha \choose \ell}$.
Since $P^{(d/2-1, d/2-1)}_\ell (x)=  {\ell +d/2-1 \choose \ell} G_{\ell;d}(x)$,
we derive
\begin{align*}
&  \, G'_{\ell;d}(\cos 	\frac{\psi}{L})= \frac{\ell}{\sin^2 \psi/L} \Big [ (1 + \frac{d-2}{2\ell})  \frac{{\ell +d/2-2 \choose \ell-1}}{{\ell +d/2-1 \choose \ell}}G_{\ell-1;d}(\cos \frac{\psi}{L}) -  \cos(\frac{\psi}{L})  G_{\ell;d}(\cos \frac{\psi}{L})   \Big ].
\end{align*}
Then in view of (\ref{G0}) we get
\begin{eqnarray*}
  G'_{\ell;d}(\cos 	\frac{\psi}{L})
&=& \frac{\ell}{\sin^2 \psi/L} \Big \{ (1 + \frac{d-2}{2\ell})  \frac{{\ell +d/2-2 \choose \ell-1}}{{\ell +d/2-1 \choose \ell}} \cr
&&
\times \Big[ \frac{\Gamma(\ell+d/2-1)}{(L-1)^{d/2-1}(\ell-1)!}\frac{2^{d/2-1}}{{\ell + d/2-2 \choose \ell-1}} \frac{1}{(\sin\psi/\ell)^{d/2-1}} \sqrt{\frac{2}{\pi (L-1)\sin \psi /L }} \cr
&&
\times \Big ( \sin\big(\psi - (\frac{d}{2}-1)\frac{\pi}{2} +\frac{\pi}{4}\big)\cr
&&+ \frac{(4(d/2-1)^2 - 1) }{8\psi(L-1)/L} \cos\big(\psi - (\frac{d}{2}-1)\frac{\pi}{2} +\frac{\pi}{4}\big)\Big )
\cr&&+ O\left(\frac{1}{\psi^{5/2}\psi^{d/2-1}}\right) + O\left(\frac{\sqrt \psi}{\ell^2} \frac{1}{\psi^{d/2-1}}\right)   \Big]\cr
&&-  \cos(\frac{\psi}{L})  \Big[\frac{2^{d/2-1}}{{\ell + d/2-1 \choose \ell}} \frac{\Gamma(\ell+d/2)}{L^{d/2-1}\ell!}  \frac{1}{(\sin\psi/\ell)^{d/2-1}} \sqrt{\frac{2}{\pi \ell \sin \psi /L}} \cr&&\Big ( \sin\big(\psi - (\frac{d}{2}-1)\frac{\pi}{2} +\frac{\pi}{4}\big)
+ \frac{(4(d/2-1)^2 - 1) }{8\psi} \cos\big(\psi - (\frac{d}{2}-1)\frac{\pi}{2} +\frac{\pi}{4}\big)\Big )\cr&&
+ O\left(\frac{1}{\psi^{5/2}\psi^{d/2-1}}\right) + O\left(\frac{\sqrt \psi}{\ell^2} \frac{1}{\psi^{d/2-1}}\right)   \Big]\Big \}.
\end{eqnarray*}
Handling the computations and using that $\frac{\sin(\psi/L)}{\psi/L}=1+O(\psi^2/L^2)$ and $\cos(\psi/L)=1+O(\psi^2/L^2)$ we find the expression
\begin{align*}
 \, G'_{\ell;d}(\cos 	\frac{\psi}{L})
&= \frac{\sqrt{\ell}}{\sin^{5/2} \psi/L} \Xi_{\ell;d}(\psi)\sqrt{\frac{2}{\pi}} \bigg[
-
\cos\big(\psi - (\frac{d}{2}-1)\frac{\pi}{2} +\frac{\pi}{4}\big)\sin(\frac{\psi}{\ell})
\\&\quad+ \frac{d^2-1}{8l}\sin \big(\psi - (\frac{d}{2}-1)\frac{\pi}{2} +\frac{\pi}{4}\big) \bigg]
+O\left(\frac{1}{\psi^{1/2}} \frac{1}{\psi^{d/2-1}}\right) +O\left( \frac{L}{\psi^{3/2}} \frac{1}{\psi^{d/2-1}}\right) \\&\quad
+O\left(\frac{L}{\psi^{5/2}} \frac{1}{\psi^{d/2-1}} +\frac{L^{2}}{\psi^{7/2}} \frac{1}{\psi^{d/2-1}}\right)
\\&\quad
+ O\left(\frac{\ell^3}{\psi^{9/2}\psi^{d/2-1}}\right) + O\left( \frac{1}{\psi^{d/2-1}}\frac{L}{ \psi^{3/2}}\right) .
\end{align*}

Noting that
$O\left(\frac{1}{\psi^{1/2}} \frac{1}{\psi^{d/2-1}}\right) =O\left( \frac{L}{\psi^{3/2}} \frac{1}{\psi^{d/2-1}}\right)$,
  $O\left(\frac{L}{\psi^{5/2}} \frac{1}{\psi^{d/2-1}}\right)=O\left(\frac{L^{2}}{\psi^{7/2}} \frac{1}{\psi^{d/2-1}}\right)$
and
 $O\left(\frac{\ell^3}{\psi^{9/2}\psi^{d/2-1}}\right) =O\left(\frac{L}{\psi^{3/2}\psi^{d/2-1}}\right)$, we conclude that
\begin{eqnarray*}
  \, G'_{\ell;d}(\cos \frac{	\psi}{L})
&=& \frac{\sqrt{\ell}}{\sin^{5/2} \psi/L} \Xi_{\ell;d}(\psi) \sqrt{\frac{2}{\pi}} \bigg[
-
\cos\big(\psi - (\frac{d}{2}-1)\frac{\pi}{2} +\frac{\pi}{4}\big)\sin(\frac{\psi}{\ell})
\\&&+ \frac{d^2-1}{8l}\sin \big(\psi - (\frac{d}{2}-1)\frac{\pi}{2} +\frac{\pi}{4}\big) \bigg]
+O\left( \frac{L}{\psi^{3/2}} \frac{1}{\psi^{d/2-1}}\right) \\&&
+O\left(\frac{L^{2}}{\psi^{7/2}} \frac{1}{\psi^{d/2-1}}\right)
\end{eqnarray*}
which leads to (\ref{G1}).\\

Finally, to prove (\ref{G2}) we remind that
	the Gegenbauer polynomials are particular solutions of the differential equation:
	\[ \sin^2(\frac{\psi}{L})G_{\ell;d}''(\cos\frac{\psi}{L})=\Big(2\big(\frac{d}{2}-\frac{1}{2}\big)+1\Big)\cos \frac{\psi}{L} G_{\ell;d}'(\cos\frac{\psi}{L})-\ell\Big(\ell+2\big(\frac{d}{2}-\frac{1}{2}\big)\Big)G_{\ell;d}(\cos\frac{\psi}{L}) .\]
From whom we get
		\begin{eqnarray*}
		G_{\ell;d}''(\cos \frac{\psi}{L})&=&\frac{((d-1)+1)}{\sin^2(\psi/L)} G_{\ell;d}'(\cos\frac{\psi}{L})-\frac{\ell^2}{\sin^2(\psi/L)}\left(1+\frac{d-1}{\ell}\right)G_{\ell;d}(\cos\frac{\psi}{L}) \\ && +O\Big(\frac{\ell^3}{\psi^{5/2}\psi^{d/2-1}}\Big)
		\end{eqnarray*}
	which concludes the proof.
\end{proof}

\medskip

Finally we are in the position to prove Corollary \ref{BerryCancel}.

\begin{proof}[Proof of Corollary \ref{BerryCancel}]
Recall that we denote by $\mathcal{L}_{\ell;d-1}(u)$ the volume of the set $T_{\ell;d}^{-1}(u)$; let us write $\mathcal{L}_{\ell;d-1}(u)[2]$ for its second chaotic projection.
From an adaptation of the proof of  \cite[Lemma 4.1]{C19} to the spherical case and the same argument as in the proof of Theorem \ref{thm:laguerre}, with $q=1$, it can be easily shown that
\begin{eqnarray*}
	\mathcal{L}_{\ell;d-1}(u)[2]&=& \sqrt{ \frac{\ell(\ell+d-1)}{d} }\sum_{p=0}^{1}\frac{\beta_{2-2p}(u)}{(2-2p)!}  \sqrt{2} C(d,p)\\&&
	\times \int_{\mathbb{S}^d} H_{2-2p}(T_{\ell;d}(x)) L_p^{(d/2-1)}\left(\frac{||\tilde{\nabla} T_{\ell;d}(x)||^2}{2}\right) \,dx,
\end{eqnarray*}
where $\beta_{m}(u):=\frac{e^{-u^2/2}}{\sqrt{2\pi}} H_m(u)$. 
Note that $C(d,0)=\frac{\Gamma(\frac{d+1}{2})}{\Gamma(\frac{d}{2})}$, $C(d,1)=- \frac{\Gamma(\frac{d+1}{2})}{2\Gamma(\frac{d+2}{2})}$ and  $\beta_0(u)=\frac{e^{-u^2/2}}{\sqrt{2\pi}}$, $\beta_2(u)=\frac{e^{-u^2/2}}{\sqrt{2\pi}}(u^2-1)$. Then
\begin{eqnarray*}
	\mathcal{L}_{\ell;d-1}(u)[2]	&=& \sqrt{ \frac{\ell(\ell+d-1)}{d} } \frac{e^{-u^2/2}}{2\sqrt{\pi}} \Gamma(\frac{d+1}{2})\frac{1}{\Gamma(d/2)}\\&&
\times \Big ( (u^2-1) \int_{\mathbb{S}^d} H_{2}(T_{\ell;d}(x))\,dx-\frac{1}{d}  \int_{\mathbb{S}^d} d- ||\tilde{\nabla}T_{\ell;d}(x)||^2 \,dx\Big ).
\end{eqnarray*}
Since $||\tilde{\nabla}T_{\ell;d}(x)||^2=\langle \tilde{\nabla} T_{\ell;d}(x), \tilde{\nabla} T_{\ell;d}(x) \rangle$, applying  Green's identity (see also \cite{R19}), using that $T_{\ell;d}$ satisfies the Helmholtz equation, and writing $\frac{\Gamma((d+1)/2)}{\Gamma(d/2)}=\frac{\sqrt{\pi}\mathcal{H}^{d-1}(\mathbb{S}^{d-1})}{\mathcal{H}^{d}(\mathbb{S}^{d})}$, we find

\begin{eqnarray*}
	\mathcal{L}_{\ell;d-1}(u)[2]	&=& \sqrt{ \frac{\ell(\ell+d-1)}{d} }  \frac{\sqrt{\pi}\mathcal{H}^{d-1}(\mathbb{S}^{d-1})}{\mathcal{H}^{d}(\mathbb{S}^{d})}
 \frac{ e^{-u^2/2}	u^2}{2\sqrt{\pi}}\int_{\mathbb{S}^d} H_{2}(T_{\ell;d}(x))\,dx.
\end{eqnarray*}

It is known that

$$\int_{0}^{\pi} G_{\ell;d}^2(\cos\theta) \sin^{d-1}\theta \,d\theta=\frac{\mathcal{H}^d(\mathbb{ S}^d)}{\mathcal{H}^{d-1}(\mathbb{ S}^{d-1})\eta_{\ell;d}}$$

and then
$$
\text{Var}\left(\int_{\mathbb{S}^d} H_{2}(T_{\ell;d}(x))\,dx\right)=\frac{2\mathcal{H}^d(\mathbb{ S}^d)^2}{\eta_{\ell;d}} \sim  \mathcal{H}^d(\mathbb{ S}^d)^2(d-1)!
\frac{1}{\ell^{d-1}},\qquad \ell\to +\infty.
$$
The result now follows immediately.
\end{proof}

\end{document}